\begin{document}


\title[Inference in $\alpha$-Brownian bridge]{Inference in $\alpha$-Brownian bridge \\ based on Karhunen-Lo\`eve expansions}

\author{Maik G\"orgens}
\address{Department of Mathematics, Uppsala University}
\curraddr{P.O. Box 480, 751 06 Uppsala, Sweden}
\email{maik@math.uu.se}

\date{\today}

\begin{abstract}
  We study a simple decision problem on the scaling parameter in the $\alpha$-Brownian bridge~$X^{(\alpha)}$ on the interval~$[0,1]$: given two values $\alpha_0, \alpha_1 \geq 0$ with $\alpha_0 + \alpha_1 \geq 1$ and some time $0 \leq T \leq 1$ we want to test $H_0: \alpha = \alpha_0$ vs. $H_1: \alpha = \alpha_1$ based on the observation of~$X^{(\alpha)}$ until time $T$. The likelihood ratio can be written as a functional of a quadratic form $\psi(X^{(\alpha)})$ of~$X^{(\alpha)}$. In order to calculate the distribution of~$\psi(X^{(\alpha)})$ under the null hypothesis, we generalize the Karhunen-Lo\`eve Theorem to positive finite measures on~$[0,1]$ and compute the Karhunen-Lo\`eve expansion of~$X^{(\alpha)}$ under such a measure. Based on this expansion, the distribution of~$\psi(X^{(\alpha)})$ follows by Smirnov's formula.
\end{abstract}

\keywords{$\alpha$-Brownian bridge, Karhunen-Lo\`eve expansion, likelihood ratio, quadratic forms}
\subjclass{60G15, 62M02}

\maketitle


\section{Introduction}\label{S:Intro}

We consider the stochastic differential equation
\equ[E:aBB_SDE]{ dX^{(\alpha)}_t = dW_t - \frac{\alpha X^{(\alpha)}_t}{1-t} dt, \qquad X^{(\alpha)}_0 = 0, \quad 0 \leq t < 1, }
where $\alpha \geq 0$ and $W = (W_t)_{t \in [0,1]}$ is standard Brownian motion. We assume that $W$ is  defined on the probability space $(C([0,1]), \mathcal{C}, \Prob)$, where $C([0,1])$ is the space of continuous functions on the interval $[0,1]$ equipped with the supremum norm, $\mathcal{C}$ denotes the Borel sets, and $\Prob$ is the Wiener measure. Let $(\F_t)_{t \in [0,1]}$ be the natural filtration induced by $W$. The unique strong solution of~\eqref{E:aBB_SDE} is given by $X^{(\alpha)}=(X^{(\alpha)}_t)_{t \in [0,1)}$ with
\equ[E:aBB_Sol]{ X^{(\alpha)}_t = \int_0^t \left( \frac{1-t}{1-s} \right)^\alpha dW_s, \qquad 0 \leq t < 1. }
For $\alpha > 0$ we have $\lim_{t \rightarrow 1} X^{(\alpha)}_t = 0$ almost surely and thus $X^{(\alpha)}$ has an extension on $[0,1]$ with $X^{(\alpha)}_1 = 0$. The process $X^{(\alpha)}$ is called the $\alpha$-Brownian bridge with scaling parameter $\alpha$.

The $\alpha$-Brownian bridge is a mean reverting process, i.e., if $X^{(\alpha)}$ deviates from its mean $0$ at some time $0 < t < 1$, it is forced to return to $0$. The scaling parameter $\alpha$ determines how strong this force is. In order to examine this behavior further we compute the ``expected future'': for $0 \leq s \leq t \leq 1$ we have
\begin{align*} 
  \E [ X^{(\alpha)}_t \mid \F_s ] &= \E \left[ \int_0^t \left( \frac{1-t}{1-x} \right)^\alpha dW_x \Big| \F_s \right] \\ 
    &= \left( \frac{1-t}{1-s} \right)^\alpha \int_0^s \left( \frac{1-s}{1-x} \right)^\alpha dW_x \\
    &= \left( \frac{1-t}{1-s} \right)^\alpha X^{(\alpha)}_s.
\end{align*}
Again, we see that the smaller the scaling parameter $\alpha$ is the more the process will deviate from its mean $0$. In the case $\alpha = 0$ we obtain standard Brownian motion, i.e., $X^{(1)} = W$ and $\E [ X^{(0)}_t \mid \F_s ] = X^{(0)}_s$, and in the case $\alpha = 1$ we obtain the usual Brownian bridge with $X^{(1)}_0 = X^{(1)}_1 = 0$ and
\[ \E [ X^{(1)}_t \mid \F_s ] = \frac{1-t}{1-s} X^{(1)}_s. \]
\begin{figure}
  \includegraphics[width=1\textwidth]{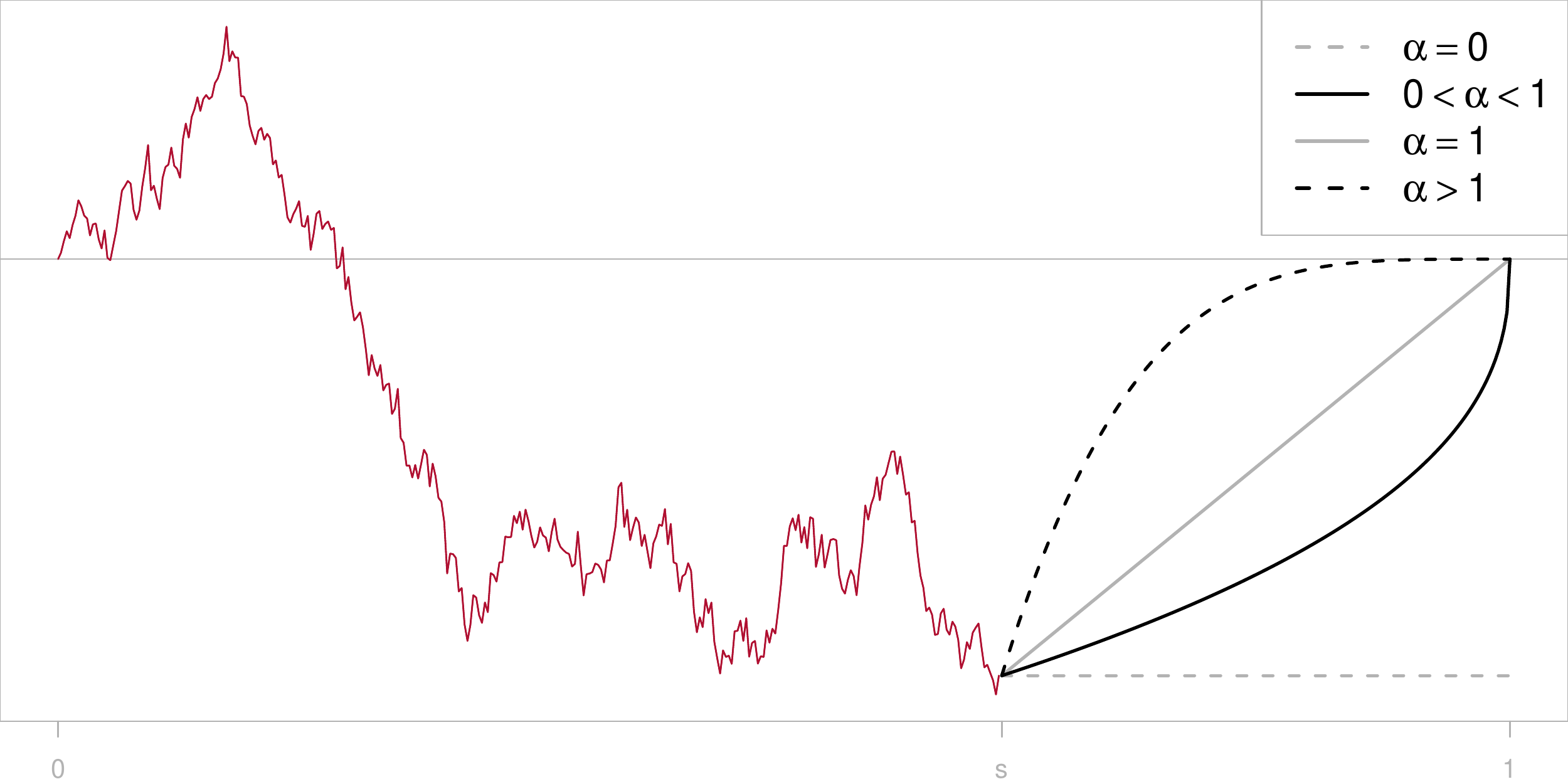}
  \caption{The influence of $\alpha$ to the ``expected future'' $\E [ X^{(\alpha)}_t \mid \F_s ]$ for different values of $\alpha$.}
\end{figure}

In this paper we assume that the scaling parameter $\alpha$ is unknown and, given two different values $\alpha_0, \alpha_1 \geq 0$ with $\alpha_0 + \alpha_1 \geq 1$ and some time $0 \leq T \leq 1$, we want to test
\equ[E:Dec_Prob]{ H_0: \alpha = \alpha_0 \qquad \text{vs.} \qquad H_1: \alpha = \alpha_1, }
based on an observed trajectory of $X^{(\alpha)}$ until time $T$, i.e., the decision should be based on the information in $\F_T$. When deciding problem~\eqref{E:Dec_Prob} we can make two types of error. Rejecting the hypothesis $H_0$ though $\alpha = \alpha_0$ is true we make an error of the first kind, whereas keeping the hypothesis $H_0$ though $\alpha = \alpha_1$ is true we make an error of the second kind. Our aim is to find that decision which minimizes the probability of making an error of the second kind, given that the probability of making an error of the first kind is not larger than $q$ for some $0 \leq q \leq 1$. The Neyman--Pearson Lemma yields the most powerful test (see~\cite{Lie08} or other introductory texts on statistical decision theory): let $\Prob^{(\alpha)}$ be the induced measure of $X^{(\alpha)}$ on the filtered measurable space $(C([0,1]), \mathcal{C}, (\F_t)_{t \in [0,1]})$ (note in particular that $\Prob^{(0)} = \Prob$) and let $\Prob^{(\alpha)}_t$ denote the restriction of the probability measure 
$\Prob^{(\alpha)}$ to the $\sigma$-algebra $\F_t$, $t \in [0,1]$. Assume $T < 1$ (the case $T=1$ is treated separately in Section~\ref{SS:T1}). Then we have to decide according to the following rule:
\[ \text{reject $H_0$ if $\phi_{\alpha_0, \alpha_1}(T) > c_{\alpha_0, \alpha_1, T}(q)$,} \]
where $\phi_{\alpha_0, \alpha_1}(T) := d\Prob^{(\alpha_1)}_T / d\Prob^{(\alpha_0)}_T$ is the likelihood ratio at time $T$ and $c_{\alpha_0, \alpha_1, T}(q)$ is chosen such that
\[ \Prob^{(\alpha_0)}(\phi_{\alpha_0, \alpha_1}(T) > c_{\alpha_0, \alpha_1, T}(q)) = q. \]
Knowing the distribution of $\phi_{\alpha_0, \alpha_1}(T)$ under $\Prob^{(\alpha_0)}$ is thus crucial in finding the optimal decision in the statistical decision problem~\eqref{E:Dec_Prob}.

In Section~\ref{SS:LRP} we will show
\begin{prop}\label{P:LRP}
  The likelihood ratio $\phi_{\alpha_0, \alpha_1}(T)$ is given by
  \equ[E:Phi]{ \phi_{\alpha_0, \alpha_1}(T) = \exp \left( (\alpha_0 - \alpha_1) ( \psi_{\alpha_0, \alpha_1}(T) +  \ln(1-T) )/2 \right), }
  where
  \equ[E:Psi]{ \psi_{\alpha_0, \alpha_1}(T) = \frac{(X^{(\alpha)}_T)^2}{1-T} + (\alpha_0 + \alpha_1 - 1) \int_0^T \frac{(X^{(\alpha)}_s)^2}{(1-s)^2} ds. }
\end{prop}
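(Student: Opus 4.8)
The plan is to read off $\phi_{\alpha_0,\alpha_1}(T)$ from Girsanov's theorem and then to eliminate the resulting stochastic integral by Itô's formula. Since $\Prob^{(0)}=\Prob$, it is enough to identify, for each $\alpha\ge 0$ and each $T<1$, the density of $\Prob^{(\alpha)}_T$ with respect to Wiener measure $\Prob_T$ on $\F_T$, and then to divide the density belonging to $\alpha_1$ by the one belonging to $\alpha_0$. Under $\Prob$ the canonical process on $C([0,1])$ is a Brownian motion, while under $\Prob^{(\alpha)}$ it solves~\eqref{E:aBB_SDE}, i.e.\ it is a diffusion with unit diffusion coefficient and drift coefficient $b_\alpha(t,x)=-\alpha x/(1-t)$. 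For $T<1$ this drift is locally Lipschitz in $x$, uniformly in $t\in[0,T]$, and of linear growth, so $\Prob^{(\alpha)}_T\sim\Prob_T$ on $\F_T$ and Girsanov's formula gives
\[
  \frac{d\Prob^{(\alpha)}_T}{d\Prob_T}
  = \exp\left( -\alpha \int_0^T \frac{X^{(\alpha)}_s}{1-s}\, dX^{(\alpha)}_s
      - \frac{\alpha^2}{2} \int_0^T \frac{(X^{(\alpha)}_s)^2}{(1-s)^2}\, ds \right)
\]
(here $X^{(\alpha)}$ stands for the canonical trajectory, into which the observed path is substituted). Dividing, the quadratic terms combine via $(\alpha_0^2-\alpha_1^2)/2=(\alpha_0-\alpha_1)(\alpha_0+\alpha_1)/2$, so that
\[
  \phi_{\alpha_0,\alpha_1}(T)
  = \exp\left( (\alpha_0-\alpha_1) \int_0^T \frac{X^{(\alpha)}_s}{1-s}\, dX^{(\alpha)}_s
      + \frac{(\alpha_0-\alpha_1)(\alpha_0+\alpha_1)}{2} \int_0^T \frac{(X^{(\alpha)}_s)^2}{(1-s)^2}\, ds \right).
\]

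The next step is to remove the stochastic integral. Applying Itô's formula to $s\mapsto (X^{(\alpha)}_s)^2/(1-s)$ on $[0,T]$ and using that $X^{(\alpha)}$ has quadratic variation $\langle X^{(\alpha)}\rangle_s=s$ gives
\[
  d\left( \frac{(X^{(\alpha)}_s)^2}{1-s} \right)
  = \frac{(X^{(\alpha)}_s)^2}{(1-s)^2}\, ds + \frac{2 X^{(\alpha)}_s}{1-s}\, dX^{(\alpha)}_s + \frac{1}{1-s}\, ds .
\]
Integrating over $[0,T]$, using $X^{(\alpha)}_0=0$ together with $\int_0^T ds/(1-s)=-\ln(1-T)$, and solving for the stochastic integral yields
\[
  \int_0^T \frac{X^{(\alpha)}_s}{1-s}\, dX^{(\alpha)}_s
  = \frac12\left( \frac{(X^{(\alpha)}_T)^2}{1-T} - \int_0^T \frac{(X^{(\alpha)}_s)^2}{(1-s)^2}\, ds + \ln(1-T) \right) .
\]
Substituting this into the expression for $\phi_{\alpha_0,\alpha_1}(T)$ and collecting the two occurrences of $\int_0^T (X^{(\alpha)}_s)^2/(1-s)^2\, ds$, whose coefficients add up to $\tfrac12(\alpha_0-\alpha_1)(\alpha_0+\alpha_1-1)$, reproduces exactly~\eqref{E:Phi}--\eqref{E:Psi}.

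The step I expect to be the main obstacle is making the change of measure rigorous, i.e.\ verifying that the exponential in the first display is a genuine $\Prob$-martingale on $[0,T]$ rather than merely a positive local martingale. For $T<1$ this can be settled either by checking Novikov's condition (using that $1-s$ stays bounded away from $0$ on $[0,T]$ and that $X^{(\alpha)}$ has Gaussian marginals), or, more cheaply, by appealing to the standard criterion for equivalence of the laws of two Itô diffusions that share the diffusion coefficient and have drifts of linear growth; since~\eqref{E:aBB_SDE} is a linear equation one may even evaluate $\E\,[\,d\Prob^{(\alpha)}_T/d\Prob_T\,]$ directly and confirm that it equals $1$. The remaining work is routine Itô calculus; in particular $\int_0^T \frac{X^{(\alpha)}_s}{1-s}\, dX^{(\alpha)}_s$ is a pathwise-defined semimartingale integral and hence unambiguous under $\Prob$ and $\Prob^{(\alpha)}$ alike. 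Note that the hypothesis $\alpha_0+\alpha_1\ge1$ plays no role here; it is needed only for the boundary case $T=1$ treated in Section~\ref{SS:T1}, where it guarantees that $\int_0^1 (X^{(\alpha)}_s)^2/(1-s)^2\, ds$ is almost surely finite.
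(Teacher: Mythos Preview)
Your proof is correct and follows essentially the same route as the paper: Girsanov's theorem to express $\phi_{\alpha_0,\alpha_1}(T)$ as a ratio of exponential densities, followed by It\^o's formula to rewrite the stochastic integral $\int_0^T \tfrac{X^{(\alpha)}_s}{1-s}\,dX^{(\alpha)}_s$ in terms of $(X^{(\alpha)}_T)^2/(1-T)$ and the ordinary integral. The only cosmetic differences are that the paper runs Girsanov in the opposite direction (defining $M^{(\alpha)}$ under $\Prob^{(\alpha)}$ and passing to $\Prob^{(0)}$) and obtains the key identity~\eqref{E:LRP_4} via integration by parts with $Y_t=X^{(\alpha)}_t/(1-t)$ rather than by applying It\^o directly to $(X^{(\alpha)}_t)^2/(1-t)$; your route is if anything slightly more direct.
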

According to~\eqref{E:Phi} it is enough to determine the distribution of $\psi_{\alpha_0, \alpha_1}(T)$ under $\Prob^{(\alpha_0)}$. Then the distribution of $\phi_{\alpha_0, \alpha_1}(T)$ follows by simple transformations.

We introduce the measure $\mu_{\alpha_0, \alpha_1, T}$ by
\equ[E:Mu]{ \mu_{\alpha_0, \alpha_1, T}(ds) := \frac{\delta_T(ds)}{1-T} + \frac{(\alpha_0+\alpha_1-1) \Ind(s \leq T) ds}{(1-s)^2}, }
where $\delta_T$ denotes the point measure at $T$ and $\Ind$ the indicator function. By the assumption $\alpha_0 + \alpha_1 \geq 1$ it follows that $\mu_{\alpha_0, \alpha_1, T}$ is a positive measure. Let $L_2(\mu_{\alpha_0, \alpha_1, T})$ denote the space of functions on $[0,1]$ that are square integrable with respect to the measure $\mu_{\alpha_0, \alpha_1, T}$. From~\eqref{E:Psi} we see that $\psi_{\alpha_0, \alpha_1}(T)$ is the squared $L_2$-norm of $X^{(\alpha)}$ under the measure $\mu_{\alpha_0, \alpha_1, T}$, i.e.,
\equ[E:Psi_Mu]{ \psi_{\alpha_0, \alpha_1}(T) = \| X^{(\alpha)} \|^2_{L_2(\mu_{\alpha_0, \alpha_1, T})}. }

The covariance function $R^{(\alpha)}(s,t) := \E[ X^{(\alpha)}_s X^{(\alpha)}_t ]$ of $X^{(\alpha)}$ is given by
\equ[E:Covar]{ R^{(\alpha)}(s,t) = \frac{(1-s)^\alpha (1-t)^\alpha}{1-2\alpha}(1-(1-(s \wedge t))^{1-2\alpha}) }
for $\alpha \neq 1/2$ and
\[ R^{(\alpha)}(s,t) = - \sqrt{(1-s)(1-t)} \ln(1- (s \wedge t)) \]
for $\alpha = 1/2$, where $s \wedge t$ denotes the minimum of $s$ and $t$. With $R^{(\alpha_0)}$ we associate the integral operator $A_{R^{(\alpha_0)}}$ defined by
\equ[E:Int_Op]{ (A_{R^{(\alpha_0)}}e)(t) = \int_0^1 R^{(\alpha_0)}(t,s) e(s) \mu_{\alpha_0, \alpha_1, T}(ds). }
For $T < 1$, we have
\equ[E:Finite_Int]{ \int_0^1 \int_0^1 |R^{(\alpha_0)}(t,s)|^2 \mu_{\alpha_0, \alpha_1, T}(dt) \mu_{\alpha_0, \alpha_1, T}(ds) < \infty. }
Hence, by the Cauchy-Schwartz inequality,
\begin{align*}
  &\| A_{R^{(\alpha_0)}}e \|^2_{L_2(\mu_{\alpha_0, \alpha_1, T})} \\
    &\qquad= \int_0^1 \left| \int_0^1 R^{(\alpha_0)}(t,s) e(s) \mu_{\alpha_0, \alpha_1, T}(ds) \right|^2 \mu_{\alpha_0, \alpha_1, T}(dt) \\
    &\qquad\leq \|e\|^2_{L_2(\mu_{\alpha_0, \alpha_1, T})} \int_0^1 \int_0^1 | R^{(\alpha_0)}(t,s)|^2 \mu_{\alpha_0, \alpha_1, T}(ds) \mu_{\alpha_0, \alpha_1, T}(dt) < \infty \\
\end{align*}
for $e \in L_2(\mu_{\alpha_0, \alpha_1, T})$ which implies that $A_{R^{(\alpha_0)}}$ is a linear and bounded operator from $L_2(\mu_{\alpha_0, \alpha_1, T})$ to $L_2(\mu_{\alpha_0, \alpha_1, T})$ with
\[ \| A_{R^{(\alpha_0)}} \|^2 \leq \int_0^1 \int_0^1 | R^{(\alpha_0)}(t,s)|^2 \mu_{\alpha_0, \alpha_1, T}(ds) \mu_{\alpha_0, \alpha_1, T}(dt) < \infty. \]
Moreover, from~\eqref{E:Finite_Int} it follows that $A_{R^{(\alpha_0)}}$ is compact, the symmetry of $R^{(\alpha_0)}$ implies the self-adjointness of $A_{R^{(\alpha_0)}}$, and since $R^{(\alpha_0)}$ is non-negative definite it follows that $A_{R^{(\alpha_0)}}$ is non-negative definite. Hence, its eigenvalues $(\lambda_k)_{k=1}^\infty$ are real and non-negative and an application of a generalized version of the Karhunen-Lo\`eve Theorem (see Section~\ref{SS:GKLT}) yields the following series expansion of $X^{(\alpha_0)}$:
\equ[E:KLE]{ X^{(\alpha_0)}_t = \sum_{k=1}^\infty Z_k e_k(t), }
where $(e_k)_{k=1}^\infty$ is the sequence of corresponding orthonormalized eigenfunctions of the eigenvalues $(\lambda_k)_{k=1}^\infty$ and $(Z_k)_{k=1}^\infty$ is a sequence of independent normal random variables with $\E Z_k^2 = \lambda_k$. The convergence in~\eqref{E:KLE} is almost surely uniform in $t$ for all $t \in [0,T]$.

From the bi-orthogonality in~\eqref{E:KLE}, i.e., independent random variables $Z_k$ and orthogonal eigenfunctions $e_k$, we obtain the following distributional equivalence for $\psi_{\alpha_0, \alpha_1}(T)$ under $\Prob^{(\alpha_0)}$: by~\eqref{E:Psi_Mu} and~\eqref{E:KLE} we have
\equ[E:Psi_Dist_Equ]{ \psi_{\alpha_0, \alpha_1}(T) = \| X^{(\alpha_0)} \|^2_{L_2(\mu_{\alpha_0, \alpha_1, T})} = \sum_{k=1}^\infty Z_k^2 =_d \sum_{k=1}^\infty \lambda_k \mathcal{N}_k^2, }
where $=_d$ means equality in distribution and $(\mathcal{N}_k)_{k=1}^\infty$ is an i.i.d. sequence of standard normal random variables. 

Random variables of the form
\[ Q_r = \sum_{k=1}^r \nu_k \mathcal{N}_k^2 \]
with $\nu_k > \nu_l \geq 0$ for $k < l$ and $\mathcal{N}_k$ as above were studied by Smirnov in~\cite{Smi37}. In~\cite{Mar75} it was proven that the formula found by Smirnov extends to $r=\infty$ whenever $\sum_{k=1}^\infty \nu_k < \infty$. Namely, it was shown that
\equ[E:Dist_Q]{ \Prob(Q_\infty \leq x) = 1 - \frac{1}{\pi} \sum_{k=1}^\infty (-1)^{k+1} \int_{1/\nu_{2k-1}}^{1/\nu_{2k}} \frac{e^{-xu/2}}{u \sqrt{|F(u)|}} du, }
where $F$ is the real valued function
\[ F(u) = \prod_{l=1}^\infty ( 1 - \nu_l u ). \]

In Theorem~\ref{T:KLE} we calculate the eigenvalues $(\lambda_k)_{k=1}^\infty$ of the operator $A_{R^{(\alpha_0)}}$. In the case $\alpha_0, \alpha_1 \geq 1/2$ with
\[ \alpha_1 \neq 1 - \alpha_0 - \frac{(1-2\alpha_0)^2 \ln(1-T)}{2 (1-2\alpha_0) \ln(1-T) + 4} \]
these are given by the positive zeros of the function
\[ F_{\alpha_0, \alpha_1, T}(\lambda) := \tan(\beta(\lambda) \ln(1-T)) + \lambda \beta(\lambda) / (1 + \lambda/2 - \lambda \alpha_0), \]
where $\beta(\lambda) = \sqrt{(\alpha_0 + \alpha_1 - 1)/\lambda - \alpha_0(\alpha_0 - 1) - 1/4}$.
In the general case $\alpha_0, \alpha_1 \geq 0$ (but with $\alpha_0 + \alpha_1 \geq 1$) further eigenvalues in addition to the zeros of the function $F_{\alpha_0, \alpha_1, T}$ can appear (see Theorem~\ref{T:KLE}). We show that $\sum_{k=1}^\infty \lambda_k < \infty$ in Proposition~\ref{P:SUMMABLE}. Then, according to~\eqref{E:Psi_Dist_Equ}, the distribution function of $\psi_{\alpha_0, \alpha_1}(T)$ under $\Prob^{(\alpha_0)}$ is given by~\eqref{E:Dist_Q} with $\nu_k = \lambda_k$. Finally, from~\eqref{E:Phi} we obtain the following

\begin{theorem}
  If $\alpha_0 < \alpha_1$, then $\phi_{\alpha_0, \alpha_1}(T) \leq (1-T)^{(\alpha_0 - \alpha_1)/2}$ and the distribution function of $\phi_{\alpha_0, \alpha_1}(T)$ under $\Prob^{(\alpha_0)}$ is given by $\Prob^{(\alpha_0)}(\phi_{\alpha_0, \alpha_1}(T) \leq x) = D_{\alpha_0, \alpha_1, T}(x)$, where
  \[ D_{\alpha_0, \alpha_1, T}(x) = \frac{1}{\pi} \sum_{k=1}^\infty (-1)^{k+1} \int_{1/\lambda_{2k-1}}^{1/\lambda_{2k}} \frac{(1-T)^{u/2} x^{u/(\alpha_1 - \alpha_0)}}{u \sqrt{|F(u)|}} du \]
  with $F(u) = \prod_{l=1}^\infty ( 1 - \lambda_l u )$. In the case $\alpha_0 > \alpha_1$ we have $\phi_{\alpha_0, \alpha_1}(T) \geq (1-T)^{(\alpha_0 - \alpha_1)/2}$ and $\Prob^{(\alpha_0)}(\phi_{\alpha_0, \alpha_1}(T) \leq x) = 1 - D_{\alpha_0, \alpha_1, T}(x)$.
\end{theorem}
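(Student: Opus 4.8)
The plan is to derive this theorem from Proposition~\ref{P:LRP}, the distributional identity~\eqref{E:Psi_Dist_Equ}, the summability in Proposition~\ref{P:SUMMABLE}, and the Martynov--Smirnov formula~\eqref{E:Dist_Q}, by a monotone change of variables. Throughout I assume $T<1$ (the case $T=1$ is treated in Section~\ref{SS:T1}) and $x>0$ (for $x\le 0$ the claim is trivial since $\phi_{\alpha_0,\alpha_1}(T)>0$ almost surely). \emph{Step 1: the deterministic bounds.} Writing $\psi:=\psi_{\alpha_0,\alpha_1}(T)$, we have $\psi=\|X^{(\alpha_0)}\|^2_{L_2(\mu_{\alpha_0,\alpha_1,T})}\ge 0$ by~\eqref{E:Psi_Mu}. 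Factoring~\eqref{E:Phi} as $\phi_{\alpha_0,\alpha_1}(T)=(1-T)^{(\alpha_0-\alpha_1)/2}\exp((\alpha_0-\alpha_1)\psi/2)$ and using that $(1-T)^{(\alpha_0-\alpha_1)/2}>0$, the factor $\exp((\alpha_0-\alpha_1)\psi/2)$ is $\le 1$ when $\alpha_0<\alpha_1$ and $\ge 1$ when $\alpha_0>\alpha_1$, which gives the two stated bounds.

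\emph{Step 2: inversion.} Since $t\mapsto\ln t$ is strictly increasing and $\alpha_0-\alpha_1\ne 0$, solving $\phi_{\alpha_0,\alpha_1}(T)\le x$ for $\psi$ yields $\{\phi_{\alpha_0,\alpha_1}(T)\le x\}=\{\psi\ge y\}$ if $\alpha_0<\alpha_1$ and $\{\phi_{\alpha_0,\alpha_1}(T)\le x\}=\{\psi\le y\}$ if $\alpha_0>\alpha_1$, where $y:=2\ln x/(\alpha_0-\alpha_1)-\ln(1-T)$; the inequality flips in the first case because $\alpha_0-\alpha_1<0$. For $T\in(0,1)$ one has $\lambda_1>0$, so $\psi=_d\lambda_1\mathcal{N}_1^2+R$ with $R\ge 0$ independent of $\mathcal{N}_1$; this is absolutely continuous, hence $\Prob^{(\alpha_0)}(\psi\ge y)=1-\Prob^{(\alpha_0)}(\psi\le y)$ (for $T=0$ everything is trivial).

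\emph{Step 3: Smirnov's formula.} By~\eqref{E:Psi_Dist_Equ}, under $\Prob^{(\alpha_0)}$ the variable $\psi$ has the law of $Q_\infty$ with $\nu_k=\lambda_k$, the eigenvalues of $A_{R^{(\alpha_0)}}$ listed in decreasing order (as computed in Theorem~\ref{T:KLE}); since $\sum_k\lambda_k<\infty$ by Proposition~\ref{P:SUMMABLE}, formula~\eqref{E:Dist_Q} is applicable. Evaluating~\eqref{E:Dist_Q} at $y$ and using $e^{-yu/2}=x^{-u/(\alpha_0-\alpha_1)}(1-T)^{u/2}=x^{u/(\alpha_1-\alpha_0)}(1-T)^{u/2}$ gives $\Prob^{(\alpha_0)}(\psi>y)=D_{\alpha_0,\alpha_1,T}(x)$. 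Combining with Step 2: if $\alpha_0<\alpha_1$ then $\Prob^{(\alpha_0)}(\phi_{\alpha_0,\alpha_1}(T)\le x)=\Prob^{(\alpha_0)}(\psi\ge y)=D_{\alpha_0,\alpha_1,T}(x)$, and if $\alpha_0>\alpha_1$ then $\Prob^{(\alpha_0)}(\phi_{\alpha_0,\alpha_1}(T)\le x)=\Prob^{(\alpha_0)}(\psi\le y)=1-D_{\alpha_0,\alpha_1,T}(x)$.

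\emph{Main difficulty.} There is no deep obstacle left once Proposition~\ref{P:LRP}, Theorem~\ref{T:KLE}, Proposition~\ref{P:SUMMABLE}, and~\eqref{E:Dist_Q} are available: the statement is a change-of-variables corollary. The points that require care are the bookkeeping of which tail of $\psi$ is inverted (it flips with the sign of $\alpha_0-\alpha_1$, which is precisely what produces the $D$ versus $1-D$ dichotomy), the verification that $\psi$ is atomless so that weak and strict inequalities coincide, checking that the eigenvalues form a strictly decreasing sequence as required by~\eqref{E:Dist_Q}, and tracking the range of $x$ for which $y\ge 0$ (outside it, $D_{\alpha_0,\alpha_1,T}$ saturates at $0$ or $1$, consistent with the deterministic bounds of Step 1).
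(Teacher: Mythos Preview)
Your proposal is correct and follows precisely the route the paper itself takes: the theorem is stated in the introduction as the consequence of combining Proposition~\ref{P:LRP}, the identity~\eqref{E:Psi_Dist_Equ}, Proposition~\ref{P:SUMMABLE}, and Smirnov's formula~\eqref{E:Dist_Q} via the monotone transformation~\eqref{E:Phi}, and the paper gives no separate formal proof beyond the sentence ``Finally, from~\eqref{E:Phi} we obtain the following''. Your write-up simply makes this derivation explicit, and the extra care you take (atomlessness of $\psi$, the sign flip in the inversion producing the $D$ versus $1-D$ dichotomy, the strict monotonicity of the $\lambda_k$) fills in details the paper leaves to the reader.
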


\begin{remark}
  We may as well define the $\alpha$-Brownian bridge on an interval $[0,S]$. Let $X^{(\alpha, S)}=(X^{(\alpha, S)}_t)_{t \in [0,S)}$ be the strong solution of the stochastic differential equation
  \[ dX^{(\alpha, S)}_t = dW_t - \frac{\alpha X^{(\alpha, S)}_t}{S-t} dt, \qquad X^{(\alpha, S)}_0 = 0, \quad 0 \leq t < S. \]
  Then, for $\alpha > 0$, we have $\lim_{t \rightarrow S} X^{(\alpha, S)}_t = 0$. The $\alpha$-Brownian bridge is self-similar. Namely,
  \[ \left( X^{(\alpha, S)}_t \right)_{t \in [0, S]} =_d \left( \sqrt{S} X^{(\alpha, 1)}_{t/S} \right)_{t \in [0, S]}. \]
  From this self-similarity the results in this paper easily extend to $\alpha$-Brownian bridges on an interval $[0,S]$. However, we do not pursue this case further.
\end{remark}

To the best of our knowledge, the $\alpha$-Brownian bridge was first studied in~\cite{Bre90}, where it was used to model the arbitrage profit associated with a given futures contract in the absence of transaction costs. In~\cite{Bar10} sample path properties of $X^{(\alpha)}$ and the maximum-likelihood estimator of $\alpha$ where studied. In~\cite{Bar11a} Laplace transforms of $X^{(\alpha)}$ are calculated. In particular, the Laplace transform of $\psi_{\alpha_0, \alpha_1}(T)$ follows from Theorem~21 in~\cite{Bar11a}. In~\cite{Bar11b}, the Karhunen-Lo\`eve expansion of $X^{(\alpha)}$ under the Lebesgue measure was computed. The decision problem~\ref{E:Dec_Prob} was studied before in~\cite{Zha13} under the assumption that $\alpha_0, \alpha_1 > 1/2$ and that the time of decision $T$ is close to $1$. An approximation of the distribution of $\phi_{\alpha_0, \alpha_1}(T)$ under $\Prob^{(\alpha_0)}$ was derived by means of large deviations. We improve those results by allowing a more general setting for the parameters $\alpha_0, \alpha_1$, and $T$, and by providing exact formulas for the distribution of the likelihood ratio under $H_0$.

The rest of the paper is organized as follows. In Section~\ref{S:Prel} we calculate the likelihood ratio $\phi_{\alpha_0, \alpha_1}(T)$ and study the cases $T=1$ and $\alpha_0 + \alpha_1 = 1$. We exclude these cases in the later sections. In Section~\ref{S:KLE} we generalize the Karhunen-Lo\`eve Theorem and calculate the Karhunen-Lo\`eve expansion of $X^{(\alpha_0)}$ under the measure $\mu_{\alpha_0,\alpha_1,T}$. In Section~\ref{S:OU} we briefly comment on how the approach of this paper extends to other processes, such as the Ornstein-Uhlenbeck process. Finally, in Section~\ref{S:Details} we give some remaining proofs we did not give in earlier sections for the sake of readability.


\section{Preliminary results and special cases}\label{S:Prel}

\subsection{The likelihood ratio process}\label{SS:LRP}

We prove Proposition~\ref{P:LRP}, i.e., we show
\[ \phi_{\alpha_0, \alpha_1}(T) = \exp \left( (\alpha_0 - \alpha_1) ( \psi_{\alpha_0, \alpha_1}(T) +  \ln(1-T) )/2 \right), \]
where
\[ \psi_{\alpha_0, \alpha_1}(T) = \frac{(X^{(\alpha)}_T)^2}{1-T} + (\alpha_0 + \alpha_1 - 1) \int_0^T \frac{(X^{(\alpha)}_s)^2}{(1-s)^2} ds. \]

\begin{proof}[Proof of Proposition~\ref{P:LRP}]
  Under $\Prob^{(0)}_T$ we have 
  \[ dX^{(\alpha)}_t = dW_t, \qquad X^{(\alpha)}_0 = 0, \quad t \leq T, \]
  i.e., $X^{(\alpha)}$ is Brownian motion. Under $\Prob^{(\alpha)}_T$ we get
  \[ dX^{(\alpha)}_t = dW_t - \frac{\alpha X^{(\alpha)}_t}{1-t} dt, \qquad X^{(\alpha)}_0 = 0, \quad t \leq T, \]
  and thus
  \[ dW_t = dX^{(\alpha)}_t + \frac{\alpha X^{(\alpha)}_t}{1-t} dt. \]
  
  For $0 \leq t \leq T$, define $M^{(\alpha)}_t$ by
  \begin{align} 
    M^{(\alpha)}_t &= \exp \left( - \int_0^t \left( - \frac{\alpha X^{(\alpha)}_s}{1-s} \right) dW_s - \frac{1}{2} \int_0^t \left( - \frac{\alpha X^{(\alpha)}_s}{1-s} \right)^2 ds \right) \label{E:LRP_1} \\
        &= \exp \left( \alpha \int_0^t \frac{X^{(\alpha)}_s}{1-s} dX^{(\alpha)}_s + \frac{\alpha^2}{2} \int_0^t \frac{(X^{(\alpha)}_s)^2}{(1-s)^2} ds \right). \notag
  \end{align}
  The process $(M^{(\alpha)}_t)_{t \in [0, T]}$ is a martingale with respect to $(\F_t)_{t \in [0, T]}$ and $\Prob^{(\alpha)}_T$ and thus, by Girsanov's Theorem, $X^{(\alpha)}$ is a Brownian motion on $\F_T$ with respect to the measure $\Q$ defined by $d \Q = M^{(\alpha)}_T d \Prob^{(\alpha)}_T$. Hence, $\Q = \Prob^{(0)}_T$ on $\F_T$ and thus
  \equ[E:LRP_2]{ \frac{d\Prob^{(\alpha)}_T}{d\Prob^{(0)}_T} = (M^{\alpha}_T)^{-1}. }
  It follows
  \equ[E:LRP_3]{\phi_{\alpha_0, \alpha_1}(T) = \frac{d\Prob^{(\alpha_1)}_T}{d\Prob^{(\alpha_0)}_T} =  M^{\alpha_0}_T / M^{\alpha_1}_T. }
  
  In order to calculate $M^{(\alpha)}_T$ set $Y_t = X^{(\alpha)}_t / (1-t)$, $0 \leq t \leq T$. Then, by It\^o's formula,
  \[ dY_t = \frac{dX^{(\alpha)}_t}{1-t} + \frac{X^{(\alpha)}_t}{(1-t)^2} dt, \quad Y_0 = 0, \quad 0 \leq t \leq T, \]
  and it follows by partial integration that
  \begin{align*}
    \int_0^T \frac{X^{(\alpha)}_s}{1-s} dX^{(\alpha)}_s &= \int_0^T Y_s dX^{(\alpha)}_s \\
      &= Y_T X^{(\alpha)}_T - Y_0 X^{(\alpha)}_0 - \int_0^T X^{(\alpha)}_s dY_s - \int_0^T dX^{(\alpha)}_s \cdot dY_s \\
      &= \frac{(X^{(\alpha)}_T)^2}{1-T} - \int_0^T \frac{X^{(\alpha)}_s}{1-s} dX^{(\alpha)}_s - \int_0^T \frac{(X^{(\alpha)}_s)^2}{(1-s)^2} ds - \int_0^T \frac{ds}{1-s},
  \end{align*}
  and thus that
  \equ[E:LRP_4]{ \int_0^T \frac{X^{(\alpha)}_s}{1-s} dX^{(\alpha)}_s = \frac{1}{2} \left( \frac{(X^{(\alpha)}_T)^2}{1-T} - \int_0^T \frac{(X^{(\alpha)}_s)^2}{(1-s)^2} ds + \ln(1-T) \right). }
  Plugging~\eqref{E:LRP_4} into~\eqref{E:LRP_1}, we obtain
  \equ[E:LRP_5]{ M^{(\alpha)}_T = \exp \left( \frac{\alpha (X^{(\alpha)}_T)^2}{2(1-T)} - \frac{\alpha}{2}(1-\alpha) \int_0^T \frac{(X^{(\alpha)}_s)^2}{(1-s)^2} ds + \frac{\alpha}{2} \ln(1-T) \right). }
  Finally, plugging~\eqref{E:LRP_5} into~\eqref{E:LRP_3} yields the desired result.
\end{proof}


\subsection{The case $T=1$}\label{SS:T1}

From~\eqref{E:LRP_2} and~\eqref{E:LRP_5} it follows that the maximum-likelihood estimator of $\alpha$ based on $\F_T$ is given by
\[ \hat \alpha_T = \left(- \frac{(X^{(\alpha)}_T)^2}{1-T} + \int_0^T \frac{(X^{(\alpha)}_s)^2}{(1-s)^2} ds - \ln(1-T)\right)/\left(2 \int_0^T \frac{(X^{(\alpha)}_s)^2}{(1-s)^2} ds\right). \]
It was shown in~\cite{Bar11a} that $\hat \alpha_T$ is a strongly consistent estimator for $\alpha$, i.e., we have $\lim_{T \rightarrow 1} \hat \alpha_T = \alpha$, $\Prob^{(\alpha)}$-almost surely. Hence, at time $T=1$ we can test~\eqref{E:Dec_Prob} without any risk of making an error of the first or the second kind. Therefore, in the remaining part of the paper we assume that $T < 1$.


\subsection{The case $\alpha_0 + \alpha_1 = 1$}

We will now study the case $\alpha_0 + \alpha_1 = 1$. From Proposition~\ref{P:LRP} we know
\[ \phi_{\alpha_0, \alpha_1}(T) = \exp \left( \frac{(\alpha_0 - \alpha_1)}{2} \left( \frac{(X^{(\alpha)}_T)^2}{1-T} + \ln(1-T) \right) \right). \]
If $\alpha_0 < \alpha_1$ then $\phi_{\alpha_0, \alpha_1}(T) \leq (1-T)^{(\alpha_0 - \alpha_1)/2}$ and since $X^{(\alpha_0)}_T$ is normally distributed with mean $0$ and variance $R^{(\alpha_0)}(T,T)$ it follows $\Prob^{(\alpha_0)}(\phi_{\alpha_0, \alpha_1}(T) \leq x) = D_{\alpha_0, \alpha_1, T}(x)$ , where
\[ D_{\alpha_0, \alpha_1, T}(x) = 2 - 2 \Phi\left(\sqrt{(1-T)\left(\frac{2\ln(x)}{\alpha_0-\alpha_1} - \ln(1-T)\right)/R^{(\alpha_0)}(T,T)}\right), \]
and
\[ \Phi(x) := \frac{1}{\sqrt{2 \pi}} \int_{-\infty}^x \exp(-y^2/2) dy \]
is the distribution function of the standard normal distribution. In the case $\alpha_0 > \alpha_1$ we obtain $\phi_{\alpha_0, \alpha_1}(T) \geq (1-T)^{(\alpha_0 - \alpha_1)/2}$ and then $\Prob^{(\alpha_0)}(\phi_{\alpha_0, \alpha_1}(T) \leq x) = 1 - D_{\alpha_0, \alpha_1, T}(x)$. 

In the particular case $\alpha_0 = 0$ and $\alpha_1 = 1$ we would like to distinguish Brownian bridge from Brownian motion. We have
\begin{align*}
  \phi_{0, 1}(T) &= \exp \left( - \frac{1}{2} \left( \frac{(X^{(\alpha)}_T)^2}{1-T} + \ln(1-T) \right) \right) \\
    &= \exp \left( - \frac{(X^{(\alpha)}_T)^2}{2(1-T)} \right) / \sqrt{1-T}.
\end{align*}
Moreover, $R^{(0)}(T,T) = T$ and thus
\[ \Prob^{(0)}(\phi_{0, 1}(T) \leq x) = 2 - 2 \Phi(\sqrt{(1-T)(-2\ln(x) - \ln(1-T))/T}) \]
for $x \leq 1 / \sqrt{1-T}$.

In the remaining part of the paper we assume that $\alpha_0 + \alpha_1 > 1$.


\section{A Karhunen-Lo\`eve expansion of $X^{(\alpha)}$}\label{S:KLE}

Let $Y = (Y_t)_{t \in [a,b]}$ be a centered continuous Gaussian process indexed by a compact interval $[a,b]$ with covariance function $R(s,t) = \E Y_s Y_t$. Let $(\lambda_k)_{k=1}^\infty$ and $(e_k)_{k=1}^\infty$ be the eigenvalues and corresponding orthonormalized eigenfunctions of the operator $A_R: L_2([a,b]) \rightarrow L_2([a,b])$ defined by
\equ[E:GKLT_0]{ (A_R e)(t) = \int_a^b R(s, t) e(s) ds, \quad e \in L_2([a,b]). }

The Karhunen-Lo\`eve Theorem (see Theorem~34.5.B in~\cite{Loe63}) implies that $Y$ has a series expansion of the form
\equ[E:GKLT_1b]{ Y_t = \sum_{k=1}^\infty Z_k e_k(t) \qquad \text{with} \qquad Z_k = \int_a^b Y_s e_k(s) ds, }
where $(Z_k)_{k=1}^\infty$ is a sequence of independent centered normal distributed random variables with $E Z_k^2 = \lambda_k$ and the convergence in~\eqref{E:GKLT_1b} is almost surely uniform in $t \in [a,b]$.

We extend this result by replacing the Lebesgue measure on the interval~$[a,b]$ by any positive finite Borel measure $\nu$ on~$[a,b]$. Then we calculate the Karhunen-Lo\`eve expansion of $X^{(\alpha)}$ under the measure $\mu_{\alpha_0, \alpha_1, T}$ defined in~\eqref{E:Mu}.

\subsection{A generalized Karhunen-Lo\`eve Theorem}\label{SS:GKLT}

Consider a continuous centered Gaussian process $Y = (Y_t)_{t \in [a,b]}$ and let $R(s,t) = \E Y_s Y_t$ be the covariance function of $Y$. Let $\nu$ be a positive finite Borel measure on~$[a,b]$ with support $C \subset [a,b]$. Then we have $R \in L_2([a,b]^2, \nu \otimes \nu)$ and the linear operator $A_R: L_2([a,b], \nu) \rightarrow L_2([a,b], \nu)$ defined by
\equ[E:GKLT_1]{ (A_R e)(t) = \int_a^b R(s, t) e(s) \nu(ds) }
is bounded, compact, self-adjoint, and non-negative definite. Hence, the eigenvalues $(\lambda_k)_{k=1}^\infty$ of $A_R$ are real and non-negative and we get the spectral decomposition
\equ[E:GKLT_2]{ A_R e = \sum_{k=1}^\infty \lambda_k \la e_k, e \ra e_k, }
where $e_k$ is the corresponding orthonormalized eigenfunction of the eigenvalue $\lambda_k$ and $\la \cdot, \cdot \ra$ denotes the scalar product of $L_2([a,b], \nu)$.

\begin{theorem}[Generalized Karhunen-Lo\`eve theorem]\label{T:GKLE}
  We have
  \equ[E:GKLT_3]{ Y_t = \sum_{k=1}^\infty Z_k e_k(t) \qquad \text{with} \qquad Z_k = \int_a^b Y_s e_k(s) \nu(ds), }
  where the convergence is almost surely uniform in $t$ for all $t \in C$. Moreover, the $(Z_k)_{k=1}^\infty$ form a sequence of independent centered normal distributed random variables with $E Z_k^2 = \lambda_k$.
\end{theorem}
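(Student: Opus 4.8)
The plan is to mimic the classical proof of the Karhunen–Loève theorem, tracking carefully where the Lebesgue measure is used and checking that a general finite positive Borel measure $\nu$ with support $C$ works equally well. First I would establish that $R \in L_2([a,b]^2, \nu\otimes\nu)$ — this is immediate from continuity of $R$ on the compact square and finiteness of $\nu$ — so that $A_R$ is Hilbert–Schmidt, hence compact; self-adjointness follows from $R(s,t)=R(t,s)$, and non-negative definiteness of $A_R$ follows from that of $R$ together with the fact that, since $R$ is continuous, $\la A_R e, e\ra = \int\int R(s,t)e(s)\overline{e(t)}\,\nu(ds)\nu(dt) \ge 0$ can be seen via Riemann-sum approximation of the integral by non-negative quadratic forms (or by the covariance interpretation below). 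This gives the spectral decomposition~\eqref{E:GKLT_2} with real non-negative eigenvalues $\lambda_k$ and an orthonormal system $(e_k)$ of eigenfunctions in $L_2([a,b],\nu)$.

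Next I would define $Z_k := \int_a^b Y_s e_k(s)\,\nu(ds)$; this integral makes sense pathwise because $Y$ is continuous and $\nu$ finite, and since $Y$ is a centered Gaussian process, $Z_k$ is a (limit of Gaussian Riemann sums, hence) centered Gaussian random variable. Using Fubini (justified by $\E\int|Y_s||Y_t|\,\nu(ds)\nu(dt) < \infty$, which holds since $\E|Y_sY_t|\le \sqrt{R(s,s)R(t,t)}$ is bounded on the compact square), I compute
\[ \E[Z_j Z_k] = \int_a^b\int_a^b R(s,t) e_j(s) e_k(t)\,\nu(ds)\nu(dt) = \la A_R e_j, e_k\ra = \lambda_j \la e_j, e_k\ra = \lambda_k \delta_{jk}, \]
and similarly $\E[Z_k Y_t] = (A_R e_k)(t) = \lambda_k e_k(t)$. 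Hence the $Z_k$ are uncorrelated, jointly Gaussian with $Y$, therefore independent, and $\E Z_k^2 = \lambda_k$. Writing $S_n(t) := \sum_{k=1}^n Z_k e_k(t)$, a direct computation gives $\E(Y_t - S_n(t))^2 = R(t,t) - \sum_{k=1}^n \lambda_k e_k(t)^2$. The pointwise convergence $\sum_{k=1}^\infty \lambda_k e_k(t)^2 = R(t,t)$ for $t \in C$ follows from Mercer's theorem applied on the support $C$ (with the restricted measure $\nu|_C$, which has full support there, and the continuous kernel $R|_{C\times C}$), so $S_n(t)\to Y_t$ in $L_2(\Prob)$ for each $t\in C$.

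The remaining — and main — step is upgrading this $L_2$ convergence to almost-sure uniform convergence on $C$. The standard route is the Itô–Nisio theorem: $(S_n)$ is a sum of independent symmetric $C(C)$-valued random elements (each $Z_k e_k$ is a continuous function on $C$, as $e_k = \lambda_k^{-1} A_R e_k$ is continuous when $\lambda_k>0$ by continuity of $R$ and dominated convergence), the partial sums converge in distribution in $C(C)$ (tightness follows from Mercer-type uniform control: $\E\sup_{t\in C}|S_m(t)-S_n(t)|$ can be bounded using the uniform convergence in Mercer's theorem together with a chaining / Gaussian-process maximal inequality), and the finite-dimensional distributions converge to those of $Y|_C$; Itô–Nisio then yields almost-sure uniform convergence on $C$, and the limit must be $Y_t$ for $t\in C$ by the $L_2$ identification. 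I expect the delicate point to be precisely the tightness/uniform-convergence argument on the support $C$: one must be careful that Mercer's theorem is being invoked on the compact set $C$ with a measure of full support there, and that continuity of the $e_k$ and the uniform convergence $\sum_k \lambda_k e_k(s)e_k(t)\to R(s,t)$ hold on $C\times C$ rather than on all of $[a,b]^2$. Once that is in place, the statement of Theorem~\ref{T:GKLE} follows exactly as in the classical case.
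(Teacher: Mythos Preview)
Your proposal is correct and follows essentially the same route as the paper: compute $\E[Z_j Z_k]$ and $\E[Y_t Z_k]$ via the eigen-equation, invoke Mercer's theorem on the support $C$ (the paper cites K\"onig for this) to get uniform convergence of $\sum_k \lambda_k e_k(t)^2$ to $R(t,t)$ and hence uniform $L_2$-convergence of the partial sums, and then appeal to the It\^o--Nisio theorem for almost-sure uniform convergence. The only difference is that the paper dispatches the It\^o--Nisio step in a single sentence, whereas you spell out more of its hypotheses (continuity of the $e_k$, tightness); this extra care is warranted but does not constitute a different argument.
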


\begin{remark}
  We may even replace the interval~$[a,b]$ by a topological Hausdorff space $E$ together with a finite Borel measure on $E$ with compact support and consider Gaussian processes indexed by $E$. However, we do not need this generality and thus we do not pursue this case further.
\end{remark}

The following proof is a modified version of the proof of the Karhunen-Lo\`eve Theorem in~\cite{Loe63}.

\begin{proof}
  From~\eqref{E:GKLT_1} and~\eqref{E:GKLT_2} it follows for all $e \in L_2([a,b], \nu)$ that
  \begin{align*}
    \int_a^b R(s, t) e(s) \nu(ds) &= \sum_{k=1}^\infty \lambda_k \la e_k, e \ra e_k(t) \\
      &= \sum_{k=1}^\infty \lambda_k \int_a^b e_k(s) e(s) \nu(ds) e_k(t) \\
      &= \int_a^b \left( \sum_{k=1}^\infty \lambda_k e_k(s) e_k(t) \right) e(s) \nu(ds),
  \end{align*}
  and thus
  \equ[E:GKLT_4]{ R(s, t) = \sum_{k=1}^\infty \lambda_k e_k(s) e_k(t) }
  for $\nu$-almost all $s,t \in C$. Moreover, by Mercer's theorem (see Theorem~3.a.1 in~\cite{Kon86}), the convergence in~\eqref{E:GKLT_4} is uniform in $s,t \in C$.
  
  We introduce
  \[ Y_t^{(n)} = \sum_{k=1}^n Z_k e_k(t) \]
  with $Z_k$ as in~\eqref{E:GKLT_3}. Then
  \begin{align} 
    E [ Y_t - Y_t^{(n)} ]^2 &= \E [ Y_t ] ^2 - 2 \E [ Y_t Y_t^{(n)} ] + \E [ Y_t^{(n)} ]^2 \label{E:GKLT_5} \\
      &= R(t, t) - 2 \sum_{k=1}^n e_k(t) \E [ Y_t Z_k ] + \sum_{k,l = 1}^n e_k(t) e_l(t) \E [ Z_k Z_l ]. \notag
  \end{align}
  Since $e_k$ is the corresponding eigenfunction of the eigenvalue $\lambda_k$ of $A_R$, we have
  \begin{align*}
    \E [ Y_t Z_k ] &= \int_a^b \E [Y_t Y_s] e_k(s) \nu(ds) = \int_a^b R(s,t) e_k(s) \nu(ds) \\
      &= (A_R e_k)(t) = \lambda_k e_k(t)
  \end{align*}
  and thus, with $\delta_{i,j}$ denoting the Kronecker symbol,
  \[ \E [ Z_k Z_l ] = \int_a^b \E [ Z_k Y_t ] e_l(t) \nu(dt) = \int_a^b \lambda_k e_k(t) e_l(t) \nu(dt) = \lambda_k \delta_{k,l}. \]
  By~\eqref{E:GKLT_5}, it follows
  \begin{align*}
    E [ Y_t - Y_t^{(n)} ]^2 &= R(t, t) - 2 \sum_{k=1}^n e_k(t) \lambda_k e_k(t) + \sum_{k = 1}^n e_k(t) e_k(t) \lambda_k \\
      &= R(t, t) - \sum_{k=1}^n \lambda_k e_k^2(t)
  \end{align*}
  and hence, by~\eqref{E:GKLT_4}, $E [ Y_t - Y_t^{(n)} ]^2 \longrightarrow 0$ as $n \rightarrow \infty$ uniformly in $t$ for $t \in C$. By the It\^o-Nisio Theorem (see Theorem~2.4 in~\cite{Vak87}), the convergence in quadratic mean implies the convergence almost surely.
\end{proof}


\subsection{A Karhunen-Lo\`eve expansion of the $\alpha$-Brownian bridge}

Let $(\lambda'_k)_{k=1}^\infty$ be the decreasing sequence of zeros of the function
\equ[E:Dist_3]{ F_{\alpha_0, \alpha_1, T}(\lambda) = \tan(\beta(\lambda) \ln(1-T)) + \lambda \beta(\lambda) / (1 + \lambda/2 - \lambda \alpha_0), }
where $\beta(\lambda) = \sqrt{(\alpha_0 + \alpha_1 - 1)/\lambda - \alpha_0(\alpha_0 - 1) - 1/4}$. Denote by $a_0 \wedge a_1$ and  $a_0 \vee a_1$ the minimum and maximum of $a_0$ and $a_1$. We prove the following theorem.

\begin{theorem}\label{T:KLE}
  Define
  \[ c := \frac{1/2 - (\alpha_0 \vee \alpha_1)}{ (1/2 - (\alpha_0 \vee \alpha_1)) \ln(1-T) + 1 } + 1/2. \]
  \begin{enumerate}
    \item If $\alpha_0 \wedge \alpha_1 \geq c$, then, under $\Prob^{(\alpha_0)}$, the sequence of decreasing eigenvalues in the Karhunen-Lo\`eve expansion of $X^{(\alpha)}$ under the measure $\mu_{\alpha_0, \alpha_1, T}$ is given by $(\lambda_k)_{k=1}^\infty$ = $(\lambda'_k)_{k=1}^\infty$. The corresponding normed eigenfunctions are given by
      \[ e_k(t) = \rho_k \sqrt{1-t} \sin(\beta(\lambda_k) \ln(1-t)), \]
      where $\rho_k$ is chosen such that $e_k$ is normalized in the $L_2(\mu_{\alpha_0, \alpha_1, T})$-norm.
    \item If $\alpha_0 \wedge \alpha_1 < c$, then, under $\Prob^{(\alpha_0)}$, the Karhunen-Lo\`eve expansion of $X^{(\alpha)}$ under the measure $\mu_{\alpha_0, \alpha_1, T}$ contains in addition to the eigenvalues from~(i) a further term $\lambda_0$ with
      \[ \lambda_0 = \frac{\alpha_0 + \alpha_1 - 1}{ \alpha_0(\alpha_0 - 1) + 1/4 - \sigma_0^2 } > \lambda_1, \]
      where $\sigma_0$ is the unique zero of the function
      \begin{align*}
        G(\sigma) &= (\sigma + \alpha_0 - 1/2) (\sigma + \alpha_1 - 1/2) \\
          &\qquad- (1-T)^{2\sigma} (\sigma - \alpha_0 + 1/2) (\sigma - \alpha_1 + 1/2)
      \end{align*}
      with $0 < \sigma_0 < 1/2 - \alpha_0$. The corresponding eigenfunction is given by
      \[ e_0(t) = \rho_0 \sqrt{1-t} \left( (1-t)^{\sigma_0} - (1-t)^{- \sigma_0} \right), \]
      where $\rho_0$ is chosen such that $e_0$ is normalized in the $L_2(\mu_{\alpha_0, \alpha_1, T})$-norm.
    \item If
      \[ \alpha_1 = 1 - \alpha_0 - \frac{(1-2\alpha_0)^2 \ln(1-T)}{2 (1-2\alpha_0) \ln(1-T) + 4} \]
      then, under $\Prob^{(\alpha_0)}$, the Karhunen-Lo\`eve expansion of $X^{(\alpha)}$ under the measure $\mu_{\alpha_0, \alpha_1, T}$ contains in addition to the eigenvalues from~(i) (and possibly~(ii)) a further term $\lambda_*$ with
      \[ \lambda_* = \frac{\alpha_0 + \alpha_1 - 1}{ \alpha_0(\alpha_0 - 1) + 1/4} > \lambda_1, \]
      The corresponding eigenfunction is given by
      \[ e_*(t) = \rho_* \sqrt{1-t} \ln(1-t), \]
      where $\rho_*$ is chosen such that $e_*$ is normalized in the $L_2(\mu_{\alpha_0, \alpha_1, T})$-norm. If~(ii) applies then $\lambda_* < \lambda_0$.
  \end{enumerate}
\end{theorem}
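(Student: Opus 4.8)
The plan is to solve the eigenvalue problem $A_{R^{(\alpha_0)}} e = \lambda e$ directly, where $A_{R^{(\alpha_0)}}$ is the integral operator~\eqref{E:Int_Op} with respect to the measure $\mu_{\alpha_0,\alpha_1,T}$. Writing out $(A_{R^{(\alpha_0)}}e)(t) = \lambda e(t)$ using~\eqref{E:Mu} gives an identity of the form
\[
  \frac{R^{(\alpha_0)}(t,T)\, e(T)}{1-T} + (\alpha_0+\alpha_1-1)\int_0^T \frac{R^{(\alpha_0)}(t,s)\, e(s)}{(1-s)^2}\, ds = \lambda e(t),
\]
valid for $\mu_{\alpha_0,\alpha_1,T}$-almost all $t$, i.e. for $t \in [0,T]$ and $t = T$. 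The first step is to observe that the right-hand side, and hence $e$, inherits enough regularity from the explicit form~\eqref{E:Covar} of $R^{(\alpha_0)}$ that one may differentiate the identity twice in $t$ on $[0,T]$. Using $\partial_t R^{(\alpha_0)}(t,s)$ and the jump of $\partial_t R^{(\alpha_0)}(t,s)$ across $s=t$ (which encodes the Brownian-motion part of the dynamics), the integral equation collapses to a second-order linear ODE for $e$ on $(0,T)$, together with boundary conditions at $t=0$ and $t=T$ coming from evaluating the (differentiated) integral equation at the endpoints. The substitution $t \mapsto \ln(1-t)$ together with the ansatz $e(t) = \sqrt{1-t}\, g(\ln(1-t))$, which is natural given the scaling $(1-t)^{\alpha_0}$ in $R^{(\alpha_0)}$, should turn this ODE into a constant-coefficient equation $g'' = (\alpha_0(\alpha_0-1) + 1/4 - (\alpha_0+\alpha_1-1)/\lambda)\, g$; the sign of the coefficient, i.e. whether $\beta(\lambda)^2 = (\alpha_0+\alpha_1-1)/\lambda - \alpha_0(\alpha_0-1) - 1/4$ is positive, zero, or negative, produces the three cases of the theorem.

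The second step is to impose the boundary conditions. The condition at $t=0$ (equivalently $\ln(1-t) = 0$) should force $e(0)=0$, selecting the $\sin(\beta(\lambda)\ln(1-t))$ branch in case~(i) and the odd combination $(1-t)^{\sigma_0} - (1-t)^{-\sigma_0}$ in case~(ii). The condition at $t=T$ — obtained by plugging $t=T$ into the once-differentiated integral equation and comparing with the value $\lambda e(T)$ from the original equation — yields the transcendental eigenvalue equation. In case~(i) this is exactly $F_{\alpha_0,\alpha_1,T}(\lambda) = 0$ from~\eqref{E:Dist_3}; in case~(ii), with $\beta(\lambda) = i\sigma$ (so $\lambda > \lambda_1$ corresponds to $\beta^2 < 0$), the tangent becomes a hyperbolic tangent and the condition reduces to $G(\sigma)=0$, whose unique root $\sigma_0 \in (0, 1/2-\alpha_0)$ must be verified by an elementary monotonicity/sign argument on $G$ (noting $G(0) = (\alpha_0-1/2)(\alpha_1-1/2) - (\alpha_0-1/2)(\alpha_1-1/2)$ needs a small limiting computation, and $G$ changes sign on the relevant interval); the degenerate case $\beta(\lambda)=0$, i.e. the borderline value of $\alpha_1$ displayed in~(iii), gives the polynomial-in-$\ln(1-t)$ solution $e_*(t) = \sqrt{1-t}\,\ln(1-t)$ and the corresponding $\lambda_*$.

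The third step is to pin down the threshold constant $c$, i.e. to show that the extra eigenvalue $\lambda_0$ (equivalently, a root $\beta^2<0$) occurs precisely when $\alpha_0 \wedge \alpha_1 < c$. This is where one must be careful: $\lambda_0$ is defined through $\sigma_0$, and whether $\sigma_0$ lies in $(0, 1/2-\alpha_0)$ — hence whether $\lambda_0 > 0$ and $\lambda_0 > \lambda_1$ — depends on the sign of $G$ near $\sigma = 0$, which after simplification is governed exactly by the comparison of $\alpha_0 \wedge \alpha_1$ with $c$. One then checks that the ordering $\lambda_* < \lambda_0$ in case~(iii), and $\lambda_0 > \lambda_1$ in case~(ii), follow from $\beta(\cdot)$ being decreasing in $\lambda$ on the relevant branch. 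Finally, one must confirm \emph{completeness}: that the eigenfunctions found (the $e_k$, together with $e_0$ and/or $e_*$ when present) span $L_2(\mu_{\alpha_0,\alpha_1,T})$, so that no eigenvalue has been missed. This follows because $A_{R^{(\alpha_0)}}$ is self-adjoint and compact with trivial kernel — the latter because $R^{(\alpha_0)}$ is strictly positive definite on $L_2(\mu_{\alpha_0,\alpha_1,T})$, which can be seen from the representation~\eqref{E:aBB_Sol} showing that $\|e\|_{L_2(\mu)} = 0$ whenever $\int e(t) X^{(\alpha_0)}_t\,\mu(dt)$ has zero variance — so the orthonormal eigenfunctions form a basis. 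I expect the main obstacle to be the careful bookkeeping of the boundary term at $t=T$: reducing the differentiated integral equation at the endpoint to the precise form $\tan(\beta\ln(1-T)) + \lambda\beta/(1+\lambda/2 - \lambda\alpha_0) = 0$ requires tracking several cancellations involving $R^{(\alpha_0)}(T,T)$, $\partial_t R^{(\alpha_0)}(t,T)|_{t=T^-}$, and the factor $1/(1-T)$ from the point mass, and getting the exact coefficient in the denominator (and thereby the exact formula for $c$) right. Once the ODE and its two boundary conditions are correctly derived, the three cases are just the trigonometric, hyperbolic, and linear solution regimes of one constant-coefficient ODE.
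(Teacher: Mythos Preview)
Your proposal is essentially the same approach as the paper's: write out the integral eigenvalue equation, differentiate twice in $t$ to obtain the Euler-type ODE $(1-t)^2 e'' - (\alpha_0(\alpha_0-1) - (\alpha_0+\alpha_1-1)/\lambda) e = 0$, use the substitution $s=\ln(1-t)$ (the paper sets $e(t)=y(\ln(1-t))$ and works with the characteristic polynomial, which amounts to your ansatz $e(t)=\sqrt{1-t}\,g(\ln(1-t))$), and then split into the three regimes $\sigma^2<0$, $\sigma^2=0$, $\sigma^2>0$ to obtain cases (i), (iii), (ii) respectively; the threshold $c$ arises exactly from the sign analysis of $G$ near $\sigma=0$, which the paper carries out via two auxiliary lemmas on $G'(0)$ and the convexity of $G$ on $[0,1/2-\alpha_0]$.

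Two minor remarks. First, for the boundary condition at $t=T$ the paper does not use the once-differentiated equation as you suggest but plugs the candidate $e$ back into the \emph{original} integral equation at $t=T$ and computes the integral explicitly (this is its Lemma~\ref{L:3}); your route via $e'(T^-)$ would also work and avoids that integral, but be aware that the paper's ``bookkeeping'' obstacle is handled by direct evaluation rather than by the derivative. Second, your completeness argument (trivial kernel of $A_{R^{(\alpha_0)}}$ from strict positive definiteness of $R^{(\alpha_0)}$) is a genuine addition --- the paper establishes only that every eigenfunction has one of the listed forms and does not separately argue that no eigenvalue has been missed.
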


\begin{remark}
  Note that the constant $c$ is always less than or equal to $1/2$. Hence, if $\alpha_0, \alpha_1 \geq 1/2$ then the second part of Theorem~\ref{T:KLE} will never apply.
\end{remark}

The proof of Theorem~\ref{T:KLE} in full requires some simple but lengthy auxiliary calculations. They are organized as Lemmas~\ref{L:3} -- \ref{L:5} and moved to Section~\ref{S:Details} for the sake of readability. Moreover, we will carry out the proof only in the case $\alpha_0 \neq 1/2$. The case $\alpha_0 = 1/2$ leads to almost exactly the same calculations.

\begin{proof}[Proof for $\alpha_0 \neq 1/2$]
  Let $\lambda$ be a non-zero eigenvalue of the integral operator associated with the kernel $R^{(\alpha_0)}$ and the measure $\mu_{\alpha_0, \alpha_1, T}$, i.e., we consider the equation (with $t \in [0,T]$)
  \begin{align}
    \lambda e(t) &= \int_0^1 R^{(\alpha_0)}(t, s) e(s) \mu_{\alpha_0, \alpha_1, T}(ds) \label{E:New_1} \\
      &= (1-T)^{-1} R^{(\alpha_0)}(t, T) e(T) \notag \\
      &\qquad \qquad + (\alpha_0 + \alpha_1 - 1) \int_0^T R^{(\alpha_0)}(t, s) e(s) / (1-s)^2 ds, \notag
  \end{align}
  where $0 \neq e \in L_2(\mu_{\alpha_0, \alpha_1, T})$. Differentiating both sides of~\eqref{E:New_1} twice with respect to $t$ gives the second order differential equation
  \[ \lambda e''(t) = \lambda e(t) \alpha_0(\alpha_0-1)(1-t)^{-2} - (\alpha_0 + \alpha_1 - 1) e(t) (1-t)^{-2} \]
  or equivalently
  \equ[E:Dist_6]{ (1-t)^2 e''(t) - (\alpha_0(\alpha_0 - 1) - (\alpha_0 + \alpha_1 - 1) / \lambda) e(t) = 0 }
  with boundary conditions $e(0) = 0$ and
  \begin{align}
    \lambda e(T) &= (1-T)^{-1} R^{(\alpha_0)}(T, T) e(T) \label{E:Dist_7} \\
      &\qquad \qquad + (\alpha_0 + \alpha_1 - 1) \int_0^T R^{(\alpha_0)}(T, s) e(s) / (1-s)^2 ds. \notag
  \end{align}

  The general solution of~\eqref{E:Dist_6} is
  \[ e(t) = y(\ln(1-t)) = (1-t)^{1/2} \left( \rho (1-t)^\sigma + \varrho (1-t)^{-\sigma} \right), \]
  where $\sigma^2 = \sigma^2(\lambda)$ is given by
  \[ \sigma^2 = \alpha_0(\alpha_0 - 1) - (\alpha_0 + \alpha_1 - 1) / \lambda + 1/4. \]
  In fact, setting $e(t) = y(\ln(1-t))$ in~\eqref{E:Dist_6} together with the substitution $s = \ln(1-t)$ yields
  \begin{align}
    0 &= (1-t)^2 \frac{\partial^2}{\partial t^2} y(\ln(1-t)) - (\sigma^2 - 1/4) y(\ln(1-t)) \notag \\
      &= y''(\ln(1-t)) - y'(\ln(1-t)) - (\sigma^2 - 1/4) y(\ln(1-t)) \notag \\
      &= y''(s) - y'(s) - (\sigma^2 - 1/4) y(s). \label{E:New_2}
  \end{align}
  The characteristic polynomial
  \[ \chi(r) = r^2 - r - (\sigma^2 - 1/4) \]
  has roots $r_{1,2} = 1/2 \pm \sigma$.

  If $\sigma^2 = 0$ then $\lambda = (\alpha_0 + \alpha_1 - 1)/(1/4 + \alpha_0(\alpha_0 - 1))$ and there is a double root in $r = 1/2$ which yields the solution
  \[ y(s) = \rho s \exp(s/2) \]
  for~\eqref{E:New_2}. Then the solution of~\eqref{E:Dist_6} is
  \[ e(t) = \rho (1-t)^{1/2} \ln(1-t). \]
  The boundary condition $e(0) = 0$ is fulfilled and we have
  \[ \frac{\alpha_0 + \alpha_1 - 1}{1/4 + \alpha_0(\alpha_0 - 1)} e(T) = \int_0^1 R^{(\alpha_0)}(T, s) e(s) \mu_{\alpha_0, \alpha_1, T}(ds) \]
  if and only if
  \equ[E:New_3]{ \alpha_1 = 1 - \alpha_0 - \frac{(1-2\alpha_0)^2 \ln(1-T)}{2 (1-2\alpha_0) \ln(1-T) + 4}. }
  Thus, if~\eqref{E:New_3} is fulfilled then $\lambda_*$ with $\sigma^2(\lambda_*) = 0$ is an eigenvalue.

  Now assume $\sigma^2 \neq 0$. Then the general solution of~\eqref{E:New_2} is given by
  \[ y(s) = \rho \exp(r_1 s) + \varrho \exp(r_2 s). \]
  Hence, the solution of~\eqref{E:Dist_6} is
  \[ e(t) = y(\ln(1-t)) = \rho (1-t)^{1/2 + \sigma} + \varrho (1-t)^{1/2 - \sigma}. \]
  The boundary condition $e(0) = 0$ yields $\varrho = - \rho$ and thus
  \[ e(t) = \rho (1-t)^{1/2} \left( (1-t)^\sigma - (1-t)^{- \sigma} \right). \]
  By Lemma~\ref{L:3}, the boundary condition~\eqref{E:Dist_7} is fulfilled whenever
  \equ[E:Dist_9]{ 0 = \lambda \sigma ((1-T)^{\sigma} + (1-T)^{-\sigma}) + (1 + \lambda/2 - \lambda \alpha_0) ((1-T)^{\sigma} - (1-T)^{-\sigma}). }
  
  In the case $\sigma^2 < 0$ we have $\lambda < (\alpha_0 + \alpha_1 - 1)/(1/4 + \alpha_0(\alpha_0 - 1))$ and $\sigma = i \beta$ with
  \[ \beta = \sqrt{(\alpha_0 + \alpha_1 - 1)/\lambda - \alpha_0(\alpha_0 - 1) - 1/4} \in \R. \]
  It follows that
  \begin{align*}
    (1-t)^{\sigma} + (1-t)^{-\sigma} &= \exp(i \beta \ln(1-t) ) + \exp(- i \beta \ln(1-t) ) \\
      &= 2 \cos(\beta \ln(1-t)), \\
    (1-t)^{\sigma} - (1-t)^{-\sigma} &= 2 i \sin(\beta \ln(1-t)),
  \end{align*}
  and thus
  \[ e(t) = \rho' \sqrt{1-t} \sin(\beta \ln(1-t)). \]
  Then finding the solutions of~\eqref{E:Dist_9} is equivalent to finding the zeros in
  \[ F_{\alpha_0, \alpha_1, T}(\lambda) = \tan(\beta \ln(1-T)) + \lambda \beta / (1 + \lambda/2 - \lambda \alpha_0) \]
  with $\beta \neq 0$.

  In the case $\sigma^2 > 0$ we have $\lambda > (\alpha_0 + \alpha_1 - 1)/(1/4 + \alpha_0(\alpha_0 - 1))$. Expressing $\lambda$ in terms of $\sigma^2$,
  \[ \lambda = \frac{\alpha_0 + \alpha_1 - 1}{ \alpha_0(\alpha_0 - 1) + 1/4 - \sigma^2 }, \]
  we are looking for the non-zero solutions of
  \begin{align} 
    0 &= \frac{\sigma (\alpha_0 + \alpha_1 - 1)}{ \alpha_0(\alpha_0 - 1) + 1/4 - \sigma^2 } ((1-T)^{\sigma} + (1-T)^{-\sigma}) \notag \\
     &\qquad \qquad + (1 + \frac{(\alpha_0 + \alpha_1 - 1)(1/2 - \alpha_0)}{ \alpha_0(\alpha_0 - 1) + 1/4 - \sigma^2 } ) ((1-T)^{\sigma} - (1-T)^{-\sigma}) \notag \\
    &= \frac{ (1-T)^{-\sigma} }{(\alpha_0 - 1/2)^2 - \sigma^2 } G(\sigma), \label{E:Dist_10}
  \end{align}
  where
  \[ G(\sigma) = (\sigma + \alpha_0 - 1/2) (\sigma + \alpha_1 - 1/2) - (1-T)^{2\sigma} (\sigma - \alpha_0 + 1/2) (\sigma - \alpha_1 + 1/2). \]
  The first factor in~\eqref{E:Dist_10} is never equal to zero and thus we are looking for the non-zero zeros of $G$. Without loss of generality we may assume that $\alpha_0 < \alpha_1$ which implies $\alpha_1 > 1/2$ because of the assumption $\alpha_0 + \alpha_1 - 1 \geq 0$. Then if
  \[ \alpha_0 \geq \frac{1/2 - \alpha_1}{ (1/2 - \alpha_1) \ln(1-T) + 1 } + 1/2, \]
  it follows $G(\sigma) > 0$ for all $\sigma > 0$ by Lemma~\ref{L:4}. On the other hand, if
  \[ \alpha_0 < \frac{1/2 - \alpha_1}{ (1/2 - \alpha_1) \ln(1-T) + 1 } + 1/2 < 1/2, \]
  then by Lemma~\ref{L:5}, $G'(0) < 0$, $G(\sigma) > 0$ for $\sigma \geq 1/2 - \alpha_0$, and $G''(\sigma) > 0$ for $0 \leq \sigma \leq 1/2 - \alpha_0$, i.e., $G$ is strictly convex. This implies that $G$ has a unique zero with $0 < \sigma < 1/2 - \alpha_0$.
\end{proof}

We now show the summability of the eigenvalues $(\lambda_k)_{k=1}^\infty$.

\begin{prop}\label{P:SUMMABLE}
  There is a constant $c > 0$ such that $k^2 \lambda_k \longrightarrow c$ as $k$ tends to infinity. In particular $\sum_{k=1}^\infty \lambda_k < \infty$.
\end{prop}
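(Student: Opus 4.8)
The plan is to reduce everything to the asymptotics of the zeros $(\lambda'_k)$ of $F_{\alpha_0,\alpha_1,T}$. By Theorem~\ref{T:KLE}, the full eigenvalue sequence consists of $(\lambda'_k)$ together with at most two additional terms ($\lambda_0$ and $\lambda_*$); inserting finitely many entries at the top of a positive sequence shifts the index by a bounded amount and does not affect whether $k^2\lambda_k$ has a positive limit, since $(k/(k-m))^2 \to 1$. So it is enough to show that $k^2\lambda'_k \to c$ with $c := (\alpha_0+\alpha_1-1)(\ln(1-T))^2/\pi^2$, which is positive because $\alpha_0+\alpha_1>1$ and $0<T<1$.

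First I would change variables from $\lambda$ to $\beta = \beta(\lambda) = \sqrt{(\alpha_0+\alpha_1-1)/\lambda - (\alpha_0-1/2)^2}$ (using $\alpha_0(\alpha_0-1)+1/4 = (\alpha_0-1/2)^2$; the case $\alpha_0=1/2$ is the obvious degeneration). On the range where $\sigma^2(\lambda)<0$, i.e. where the $\tan$-branch of the proof of Theorem~\ref{T:KLE} applies, $\lambda \mapsto \beta$ is a strictly decreasing bijection onto $(0,\infty)$ with inverse $\lambda(\beta) = (\alpha_0+\alpha_1-1)/(\beta^2+(\alpha_0-1/2)^2)$. Writing $L := -\ln(1-T)>0$ and using $\tan(-\beta L) = -\tan(\beta L)$, the equation $F_{\alpha_0,\alpha_1,T}(\lambda)=0$ is equivalent to $\tan(\beta L) = g(\beta)$, where $g(\beta) := \lambda(\beta)\beta/(1+\lambda(\beta)(1/2-\alpha_0))$; a short computation shows $g(\beta) = (\alpha_0+\alpha_1-1)\beta/(\beta^2 + (1/2-\alpha_0)(\alpha_1-1/2))$, so $g(\beta)>0$ for large $\beta$, $g(\beta)\sim(\alpha_0+\alpha_1-1)/\beta\to0$, and $g'(\beta)\to0$ as $\beta\to\infty$.

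Next I would count and locate the zeros of $\tan(\beta L)=g(\beta)$. On each interval $I_n := ((n-1/2)\pi/L,(n+1/2)\pi/L)$ the map $\beta\mapsto\tan(\beta L)-g(\beta)$ runs from $-\infty$ to $+\infty$, and its derivative $L\sec^2(\beta L)-g'(\beta)$ is bounded below by $L-|g'(\beta)|>L/2>0$ once $n$ is large enough; hence for all large $n$ there is exactly one zero $\beta_n\in I_n$, and from $\tan(\beta_n L)=g(\beta_n)\to0^+$ we get $\beta_n L = n\pi + \arctan g(\beta_n)$, i.e. $\beta_n = n\pi/L + O(1/n)$. Only finitely many zeros lie below this threshold (on each bounded $\beta$-interval $F$ is real-analytic away from its finitely many poles and is not identically zero, so it has finitely many zeros there). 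Enumerating all the zeros in increasing order $\beta_1<\beta_2<\cdots$ — equivalently $\lambda'_1>\lambda'_2>\cdots$, since $\lambda(\beta)$ is decreasing — we obtain $\beta_k = k\pi/L + O(1)$, hence $\beta_k/k\to\pi/L$. Therefore
\[ k^2\lambda'_k = \frac{(\alpha_0+\alpha_1-1)\,k^2}{\beta_k^2+(\alpha_0-1/2)^2} \longrightarrow \frac{(\alpha_0+\alpha_1-1)L^2}{\pi^2} = c, \]
and the first step gives $k^2\lambda_k\to c>0$. In particular $\lambda_k = O(k^{-2})$, so $\sum_k\lambda_k<\infty$ by comparison with $\sum_k k^{-2}$.

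The main obstacle is the third step: making the ``one zero per period of the tangent, located at $n\pi/L+O(1/n)$'' statement rigorous, which needs (a) uniform control of $g$ and $g'$ for large $\beta$ so that no crossing is missed or duplicated and the transient range contributes only a bounded index offset, and (b) checking that $F$ produces no spurious zeros near its poles. The change of variables, the closed form for $g$, and the final limit are all routine.
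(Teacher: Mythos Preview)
Your proposal is correct and follows essentially the same approach as the paper: reduce to the zeros of $F_{\alpha_0,\alpha_1,T}$, note that the second summand tends to zero so the zeros are asymptotically those of $\tan(\beta\ln(1-T))$, and read off $\beta_k\sim k\pi/|\ln(1-T)|$, giving the same constant $c=(\alpha_0+\alpha_1-1)(\ln(1-T))^2/\pi^2$. The paper's argument is in fact rather brief and heuristic at exactly the point you flag as the main obstacle (it simply says the zeros are ``essentially'' those of the tangent), so your version, with the one-zero-per-period count and the $O(1/n)$ localization, is a more careful execution of the same idea rather than a different route.
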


\begin{proof}
  According to Theorem~\ref{T:KLE} the $\lambda_k$'s are (possibly except for two) given by the zeros of the function
  \equ[E:Dist_A4]{ F_{\alpha_0, \alpha_1, T}(\lambda) = \tan(\beta(\lambda) \ln(1-T)) + \lambda \beta(\lambda) / (1 + \lambda/2 - \lambda \alpha_0), }
  where $\beta(\lambda) = \sqrt{(\alpha_0 + \alpha_1 - 1)/\lambda - \alpha_0(\alpha_0 - 1) - 1/4}$. We know that $\lambda_k \longrightarrow 0$ and thus that
  \[ \lambda_k \beta(\lambda_k) / (1 + \lambda_k/2 - \lambda_k \alpha_0) \longrightarrow 0 \]
  as $k$ tends to infinity. Hence, for small $\lambda$ the zeros of $F_{\alpha_0, \alpha_1, T}(\lambda)$ are essentially given by the zeros of $\tan(\beta(\lambda) \ln(1-T))$. Those are given by $\lambda_k'$ such that $\beta(\lambda_k') \ln(1-T) = - k \pi$, $k=1,2,\ldots$, which implies that
  \[ \lambda_k' = \frac{\alpha_0 + \alpha_1 - 1}{\alpha_0(\alpha_0 - 1) + 1/4 + k^2 \pi^2 / (\ln(1-T))^2}. \]
  The result follows with $c = (\alpha_0 + \alpha_1 - 1)(\ln(1-T))^2 / \pi^2$.
\end{proof}


\section{Ornstein-Uhlenbeck processes}\label{S:OU}

The approach described in Section~\ref{S:Intro} is not restricted to $\alpha$-Brownian bridges but may be applied to other cases as well. We briefly study the case of Ornstein-Uhlenbeck processes. Proofs are omitted since they follow the same paths as the proofs of Proposition~\ref{P:LRP} and Theorem~\ref{T:KLE}. Moreover, in order to emphasize the analogy to Section~\ref{S:Intro} we use the same notation as there.

We consider the stochastic differential equation
\[ dX^{(\alpha)}_t = dW_t - \alpha X^{(\alpha)}_t dt, \qquad X^{(\alpha)}_0 = 0, \quad 0 \leq t < \infty, \]
where $\alpha \geq 0$ and $W = (W_t)_{t \in [0,\infty)}$ is standard Brownian motion. Let $\F_t$ be the induced filtration of $(W_s)_{s \in [0,t]}$. Given two different values $\alpha_0, \alpha_1 \geq 0$ and some time $0 < T < \infty$, we want to test
\[ H_0: \alpha = \alpha_0 \qquad \text{vs.} \qquad H_1: \alpha = \alpha_1, \]
based on an observed trajectory of $X^{(\alpha)}$ until time $T$. Again, our aim is to find that decision which minimizes the probability of making an error of the second kind, given that the probability of making an error of the first kind is not larger than $q$ for some $0 \leq q \leq 1$. Let $\Prob^{(\alpha)}_T$ be the induced measure of $(X^{(\alpha)}_t)_{t \in [0,T]}$ on the measurable space $(C([0,T]), \mathcal{C})$. Then we have to decide according to the following rule:
\[ \text{reject $H_0$ if $\phi_{\alpha_0, \alpha_1}(T) > c_{\alpha_0, \alpha_1, T}(q)$,} \]
where $\phi_{\alpha_0, \alpha_1}(T) := d\Prob^{(\alpha_1)}_T / d\Prob^{(\alpha_0)}_T$ is the likelihood ratio at time $T$ and $c_{\alpha_0, \alpha_1, T}(q)$ is chosen such that
\[ \Prob^{(\alpha_0)}(\phi_{\alpha_0, \alpha_1}(T) > c_{\alpha_0, \alpha_1, T}(q)) = q. \]

Analogous to Proposition~\ref{P:LRP} we obtain
\equ[E:Phi_OU]{ \phi_{\alpha_0, \alpha_1}(T) = \exp \left( (\alpha_0 - \alpha_1) ( \psi_{\alpha_0, \alpha_1}(T) - T )/2 \right), }
where
\[ \psi_{\alpha_0, \alpha_1}(T) = (X^{(\alpha)}_T)^2 + (\alpha_0 + \alpha_1) \int_0^T (X^{(\alpha)}_s)^2 ds. \]
Introducing the measure
\[ \mu_{\alpha_0, \alpha_1, T}(ds) := \delta_T(ds) + (\alpha_0+\alpha_1) \Ind(s \leq T) ds, \]
we obtain
\[ \psi_{\alpha_0, \alpha_1}(T) = \| X^{(\alpha)} \|^2_{L_2(\mu_{\alpha_0, \alpha_1, T})}. \]

The covariance function of $X^{(\alpha)}$ is given by
\[ R^{(\alpha)}(s,t) := \E[ X^{(\alpha)}_s X^{(\alpha)}_t ] = \frac{1}{2\alpha}(e^{-\alpha |s-t|} - e^{-\alpha (s+t)}). \]
With $R^{(\alpha_0)}$ we associate the integral operator $A_{R^{(\alpha_0)}}: L_2(\mu_{\alpha_0, \alpha_1, T}) \rightarrow L_2(\mu_{\alpha_0, \alpha_1, T})$ defined by
\[ (A_{R^{(\alpha_0)}}e)(t) = \int_0^\infty R^{(\alpha_0)}(t,s) e(s) \mu_{\alpha_0, \alpha_1, T}(ds). \]
Analogous to Theorem~\ref{T:KLE} we obtain

\begin{theorem} 
  The sequence $(\lambda_k)_{k=1}^\infty$ of decreasing eigenvalues of the operator $A_{R^{(\alpha_0)}}$ is given by the zeros of the function
  \[ F_{\alpha_0, \alpha_1, T}(\lambda) := \tan(\beta(\lambda)T) - \frac{\lambda \beta(\lambda)}{1 - \lambda \alpha_0}, \]
  where $\beta(\lambda) = \sqrt{(\alpha_0+\alpha_1)/\lambda - \alpha_0^2}$. The corresponding normed eigenfunctions are given by
  \[ e_k(t) = \rho_k \sin(\beta(\lambda_k) t), \]
  where $\rho_k$ is chosen such that $e_k$ is normalized in the $L_2(\mu_{\alpha_0, \alpha_1, T})$-norm.
\end{theorem}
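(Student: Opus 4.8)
The plan is to transcribe the proof of Theorem~\ref{T:KLE}, replacing the $\alpha$-Brownian-bridge kernel by $R^{(\alpha_0)}(s,t) = \frac{1}{2\alpha_0}(e^{-\alpha_0|s-t|} - e^{-\alpha_0(s+t)})$ and the bridge measure by $\mu_{\alpha_0,\alpha_1,T}(ds) = \delta_T(ds) + (\alpha_0+\alpha_1)\Ind(s\le T)\,ds$. First I would observe that $\mu_{\alpha_0,\alpha_1,T}$ is a finite positive measure with compact support $[0,T]$, on which $R^{(\alpha_0)}$ is continuous and bounded, so $R^{(\alpha_0)}\in L_2([0,T]^2,\mu_{\alpha_0,\alpha_1,T}\otimes\mu_{\alpha_0,\alpha_1,T})$; exactly as in Section~\ref{S:Intro} this makes $A_{R^{(\alpha_0)}}$ bounded, compact, self-adjoint and non-negative definite, so its eigenvalues are real and non-negative and, once the eigenpairs are identified, Theorem~\ref{T:GKLE} (applied on $[0,T]$, where the measure lives) produces the stated expansion of $X^{(\alpha_0)}$.

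For a non-zero eigenvalue $\lambda$ with eigenfunction $0\ne e\in L_2(\mu_{\alpha_0,\alpha_1,T})$ the eigenvalue equation for $t\in[0,T]$ reads
\[ \lambda e(t) = R^{(\alpha_0)}(t,T)\,e(T) + (\alpha_0+\alpha_1)\int_0^T R^{(\alpha_0)}(t,s)\,e(s)\,ds. \]
Differentiating twice in $t$ --- splitting the Lebesgue integral at $s=t$ first, so that the kink of $e^{-\alpha_0|s-t|}$ produces the term $-(\alpha_0+\alpha_1)e(t)$ while the remainder of the kernel merely reproduces $\alpha_0^2$ times the right-hand side --- turns this into the ODE $\lambda e''(t) = (\alpha_0^2\lambda - (\alpha_0+\alpha_1))e(t)$ with boundary condition $e(0)=0$, since $R^{(\alpha_0)}(0,\cdot)\equiv 0$. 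Conversely, the two sides of the eigenvalue equation solve the same inhomogeneous linear equation, so their difference solves $v''=\alpha_0^2 v$ with $v(0)=0$, hence is a multiple of $\sinh(\alpha_0 t)$ and vanishes identically as soon as it vanishes at $t=T$. Thus the integral equation is equivalent to the ODE together with $e(0)=0$ and the single scalar relation obtained by setting $t=T$ in the eigenvalue equation; writing that relation out from the closed form of $R^{(\alpha_0)}$ is the Ornstein-Uhlenbeck analogue of Lemma~\ref{L:3} and is the one genuinely computational step.

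The characteristic exponents of the ODE are $\pm\sigma$ with $\sigma^2 = \alpha_0^2 - (\alpha_0+\alpha_1)/\lambda$. Since $A_{R^{(\alpha_0)}}$ is compact its eigenvalues accumulate at $0$, so all but finitely many lie in the range $\sigma^2<0$, where $\sigma=i\beta(\lambda)$ with $\beta(\lambda)=\sqrt{(\alpha_0+\alpha_1)/\lambda-\alpha_0^2}$; the general solution is then $\rho\cos(\beta t)+\rho'\sin(\beta t)$, and $e(0)=0$ leaves $e(t)=\rho'\sin(\beta t)$. Substituting this into the $t=T$ relation and simplifying --- using $e^{\pm i\beta T}$ to turn the boundary relation into one involving $\cos(\beta T)$ and $\sin(\beta T)$, then dividing by $\cos(\beta T)$ --- collapses it to $\tan(\beta(\lambda)T)=\lambda\beta(\lambda)/(1-\lambda\alpha_0)$, i.e.\ $F_{\alpha_0,\alpha_1,T}(\lambda)=0$; the constant $\rho_k$ is fixed afterwards by $\|e_k\|_{L_2(\mu_{\alpha_0,\alpha_1,T})}=1$. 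To be complete one must still examine the degenerate regimes $\sigma^2=0$ (candidate $e(t)=\rho t$ at $\lambda=(\alpha_0+\alpha_1)/\alpha_0^2$) and $\sigma^2>0$ (candidate $e(t)=\rho\sinh(\sigma t)$), checking whether the $t=T$ relation can be met there --- this is the counterpart of parts~(ii)--(iii) of Theorem~\ref{T:KLE}, and the statement reflects that for the Ornstein-Uhlenbeck kernel no such additional eigenvalue arises. The two real obstacles are precisely these: performing the boundary-integral computation cleanly and disposing of the $\sigma^2\ge 0$ cases; everything else is a line-for-line copy of the bridge argument.
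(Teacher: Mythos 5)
Your proposal is correct and is precisely the argument the paper intends: the paper omits the proof of this theorem, stating only that it "follows the same paths" as Proposition~\ref{P:LRP} and Theorem~\ref{T:KLE}, and your transcription carries out exactly that adaptation (the diagonal kink of $e^{-\alpha_0|s-t|}$ yielding $\lambda e''=(\alpha_0^2\lambda-(\alpha_0+\alpha_1))e$, the boundary conditions $e(0)=0$ and the $t=T$ relation, and the reduction to $\tan(\beta T)=\lambda\beta/(1-\lambda\alpha_0)$ all check out). The only remaining work you correctly flag rather than execute — the explicit $t=T$ integral (which indeed collapses via $\alpha_0^2+\beta^2=(\alpha_0+\alpha_1)/\lambda$ to the stated transcendental equation) and ruling out the $\sigma^2\ge 0$ candidates (where $1-\lambda\alpha_0<0$ makes the corresponding relation unsolvable) — goes through as you describe.
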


By Theorem~\ref{T:GKLE} we have the following series expansion of $X^{(\alpha_0)}$:
\equ[E:KLE_OU]{ X^{(\alpha_0)}_t = \sum_{k=0}^\infty Z_k e_k(t), }
where $(Z_k)_{k=0}^\infty$ is a sequence of independent normal random variables with $\E Z_k^2 = \lambda_k$. The convergence in~\eqref{E:KLE_OU} is almost surely uniform in $t$ for all $t \in [0,T]$. It follows that
\equ[E:Psi_Dist_Equ_OU]{ \psi_{\alpha_0, \alpha_1}(T) = \| X^{(\alpha_0)} \|^2_{L_2(\mu_{\alpha_0, \alpha_1, T})} = \sum_{k=0}^\infty Z_k^2 =_d \sum_{k=0}^\infty \lambda_k \mathcal{N}_k^2, }
where $(\mathcal{N}_k)_{k=0}^\infty$ is an i.i.d. sequence of standard normal random variables.

Hence, according to~\eqref{E:Psi_Dist_Equ_OU}, the distribution function of $\psi_{\alpha_0, \alpha_1}(T)$ under $\Prob^{(\alpha_0)}$ is given by~\eqref{E:Dist_Q} with $\nu_k = \lambda_{k-1}$. Finally, from~\eqref{E:Phi_OU} we obtain

\begin{theorem}
  If $\alpha_0 < \alpha_1$, then $\phi_{\alpha_0, \alpha_1}(T) \leq \exp((\alpha_1 - \alpha_0)T/2)$ and the distribution function of $\phi_{\alpha_0, \alpha_1}(T)$ under $\Prob^{(\alpha_0)}$ is given by $\Prob^{(\alpha_0)}(\phi_{\alpha_0, \alpha_1}(T) \leq x) = D_{\alpha_0, \alpha_1, T}(x)$, where
  \[ D_{\alpha_0, \alpha_1, T}(x) = \frac{1}{\pi} \sum_{k=0}^\infty (-1)^k \int_{1/\lambda_{2k}}^{1/\lambda_{2k+1}} \frac{e^{-uT/2} x^{u/(\alpha_1 - \alpha_0)}}{u \sqrt{|F(u)|}} du \]
  with $F(u) = \prod_{l=0}^\infty ( 1 - \lambda_l u )$. In the case $\alpha_0 > \alpha_1$ we have $\phi_{\alpha_0, \alpha_1}(T) \geq \exp((\alpha_1 - \alpha_0)T/2)$ and $\Prob^{(\alpha_0)}(\phi_{\alpha_0, \alpha_1}(T) \leq x) = 1 - D_{\alpha_0, \alpha_1, T}(x)$.
\end{theorem}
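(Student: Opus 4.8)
The plan is to reduce everything to the already-established distribution of $\psi_{\alpha_0,\alpha_1}(T)$ under $\Prob^{(\alpha_0)}$ via the explicit relation~\eqref{E:Phi_OU}, exactly mirroring the argument for the $\alpha$-Brownian bridge. Since $\psi_{\alpha_0,\alpha_1}(T) = \|X^{(\alpha_0)}\|^2_{L_2(\mu_{\alpha_0,\alpha_1,T})} \ge 0$, the pointwise bounds on $\phi_{\alpha_0,\alpha_1}(T)$ drop out of~\eqref{E:Phi_OU} at once: when $\alpha_0 < \alpha_1$ the exponent $(\alpha_0-\alpha_1)(\psi_{\alpha_0,\alpha_1}(T)-T)/2$ is largest at $\psi_{\alpha_0,\alpha_1}(T)=0$, giving $\phi_{\alpha_0,\alpha_1}(T) \le \exp((\alpha_1-\alpha_0)T/2)$, and when $\alpha_0 > \alpha_1$ the same substitution gives the reverse inequality.

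For the distribution, I would first record, as in~\eqref{E:Psi_Dist_Equ_OU} and the remark following it, that under $\Prob^{(\alpha_0)}$ one has $\psi_{\alpha_0,\alpha_1}(T) =_d \sum_{k=0}^\infty \lambda_k \mathcal{N}_k^2$, so that Smirnov's formula~\eqref{E:Dist_Q}, applied with $\nu_k = \lambda_{k-1}$, yields after reindexing $k \mapsto k+1$ (which turns $\sum_{k\ge1}(-1)^{k+1}\int_{1/\nu_{2k-1}}^{1/\nu_{2k}}$ into $\sum_{k\ge0}(-1)^k\int_{1/\lambda_{2k}}^{1/\lambda_{2k+1}}$ and $\prod_{l\ge1}(1-\nu_l u)$ into $\prod_{l\ge0}(1-\lambda_l u)$) the identity
\[
  \Prob^{(\alpha_0)}\!\big(\psi_{\alpha_0,\alpha_1}(T) \le y\big) = 1 - \frac{1}{\pi}\sum_{k=0}^\infty (-1)^k \int_{1/\lambda_{2k}}^{1/\lambda_{2k+1}} \frac{e^{-yu/2}}{u\sqrt{|F(u)|}}\,du, \qquad F(u) = \prod_{l=0}^\infty (1-\lambda_l u).
\]
As this distribution is continuous, the inequality ``$\le$'' may be replaced by ``$<$'' freely.

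Then I would invert~\eqref{E:Phi_OU}. Set $y := T - 2\ln x/(\alpha_1-\alpha_0)$; note $e^{-yu/2} = e^{-Tu/2}x^{u/(\alpha_1-\alpha_0)}$. If $\alpha_0 < \alpha_1$, then by~\eqref{E:Phi_OU} the event $\{\phi_{\alpha_0,\alpha_1}(T) \le x\}$ equals $\{\psi_{\alpha_0,\alpha_1}(T) \ge y\}$, so $\Prob^{(\alpha_0)}(\phi_{\alpha_0,\alpha_1}(T)\le x) = 1 - \Prob^{(\alpha_0)}(\psi_{\alpha_0,\alpha_1}(T) < y)$, and substituting the displayed formula gives precisely $D_{\alpha_0,\alpha_1,T}(x)$. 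If $\alpha_0 > \alpha_1$, the same $y$ appears but now $\{\phi_{\alpha_0,\alpha_1}(T)\le x\} = \{\psi_{\alpha_0,\alpha_1}(T)\le y\}$, which gives $\Prob^{(\alpha_0)}(\phi_{\alpha_0,\alpha_1}(T)\le x) = 1 - D_{\alpha_0,\alpha_1,T}(x)$.

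There is essentially no genuine obstacle: the core of the argument is a monotone change of variables applied to an identity that is already in hand. The only points needing (routine) care are the structural facts about the eigenvalues $(\lambda_k)$ that legitimize the use of Smirnov's formula, namely that they are positive, simple, strictly decreasing, and summable. These follow from the explicit description of $(\lambda_k)$ as the zeros of $F_{\alpha_0,\alpha_1,T}(\lambda) = \tan(\beta(\lambda)T) - \lambda\beta(\lambda)/(1-\lambda\alpha_0)$ together with the same asymptotic analysis as in Proposition~\ref{P:SUMMABLE}, which here gives $\lambda_k \sim (\alpha_0+\alpha_1)T^2/(k^2\pi^2)$ and in particular $\sum_k \lambda_k < \infty$.
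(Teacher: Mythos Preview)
Your proposal is correct and follows exactly the approach the paper intends: the paper omits the proof entirely, stating only that it proceeds ``from~\eqref{E:Phi_OU}'' along the same lines as the $\alpha$-Brownian bridge case, and your argument is precisely that transcription---Smirnov's formula applied to~\eqref{E:Psi_Dist_Equ_OU} followed by the monotone change of variables dictated by~\eqref{E:Phi_OU}. The reindexing, the computation $e^{-yu/2}=e^{-Tu/2}x^{u/(\alpha_1-\alpha_0)}$, and the case split on the sign of $\alpha_0-\alpha_1$ are all handled correctly.
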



\section{Remaining proofs}\label{S:Details}

\begin{lemma}\label{L:3}
  Assume that $0 \leq \alpha_0 \neq 1/2$ and let
  \equ[E:Dist_8n]{ e(t) = \rho (1-t)^{1/2} \left( (1-t)^\sigma - (1-t)^{- \sigma} \right), }
  where $\sigma^2 = \sigma^2(\lambda) = \alpha_0(\alpha_0 - 1) - (\alpha_0 + \alpha_1 - 1)/\lambda + 1/4$. Then, for $0 < T < 1$,
  \equ[E:Dist_7n]{ \lambda e(T) = \int_0^1 R^{(\alpha_0)}(T, s) e(s) \mu_{\alpha_0, \alpha_1, T}(ds) }
  if and only if
  \[ 0 = \lambda \sigma ((1-T)^{\sigma} + (1-T)^{-\sigma}) + (1 + \lambda/2 - \lambda \alpha_0) ((1-T)^{\sigma} - (1-T)^{-\sigma}). \]
\end{lemma}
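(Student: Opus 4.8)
\emph{Proof sketch.} The plan is a direct computation: substitute the explicit forms of $e$, of the covariance $R^{(\alpha_0)}$ from~\eqref{E:Covar}, and of the measure $\mu_{\alpha_0,\alpha_1,T}$ from~\eqref{E:Mu} into the right-hand side of~\eqref{E:Dist_7n}, evaluate the resulting elementary integral term by term, and simplify until the claimed identity drops out.

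First I would split the right-hand side of~\eqref{E:Dist_7n} according to~\eqref{E:Mu} into the point-mass part $R^{(\alpha_0)}(T,T)\,e(T)/(1-T)$ and the absolutely continuous part $(\alpha_0+\alpha_1-1)\int_0^T R^{(\alpha_0)}(T,s)\,e(s)\,(1-s)^{-2}\,ds$. For $s\le T$ we have $s\wedge T=s$, so (using $\alpha_0\neq 1/2$) $R^{(\alpha_0)}(T,s)=(1-T)^{\alpha_0}\bigl((1-s)^{\alpha_0}-(1-s)^{1-\alpha_0}\bigr)/(1-2\alpha_0)$; multiplying by $e(s)(1-s)^{-2}=\rho\bigl((1-s)^{\sigma-3/2}-(1-s)^{-\sigma-3/2}\bigr)$ and expanding gives a linear combination of the four powers $(1-s)^{\alpha_0-3/2\pm\sigma}$ and $(1-s)^{-\alpha_0-1/2\pm\sigma}$. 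Each is integrated via $\int_0^T(1-s)^p\,ds=(1-(1-T)^{p+1})/(p+1)$; the exponents $p+1$ that occur are $\pm a$ and $\pm b$ with $a:=1/2-\alpha_0+\sigma$ and $b:=1/2-\alpha_0-\sigma$, and these are all nonzero, since $a=0$ or $b=0$ would force $\sigma^2=(\alpha_0-1/2)^2$ and hence $(\alpha_0+\alpha_1-1)/\lambda=0$, impossible as $\lambda$ is a nonzero eigenvalue and $\alpha_0+\alpha_1>1$.

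The constant ``$1$''-contributions to the four integrals cancel in pairs, and the key algebraic input is the identity $(\alpha_0+\alpha_1-1)/\lambda=(\alpha_0-1/2)^2-\sigma^2=ab$, immediate from the definition of $\sigma^2(\lambda)$ in~\eqref{E:Dist_8n}: the prefactor $\alpha_0+\alpha_1-1$ then converts the denominators $a,b$ coming from the integration back into numerators $b,a$, so the absolutely continuous part becomes $\lambda$ times a denominator-free expression in $(1-T)^{\pm a}$ and $(1-T)^{\pm b}$. Substituting $(1-T)^{\pm a}=(1-T)^{\pm(1/2-\alpha_0)}(1-T)^{\pm\sigma}$ (and likewise for $b$), setting $Q:=(1-T)^{2\alpha_0-1}$, $u:=(1-T)^{\sigma}$, $v:=(1-T)^{-\sigma}$, adding the point-mass part, which equals $\rho(1-T)^{1/2}(u-v)(Q-1)/(1-2\alpha_0)$, and collecting (using $a+b=1-2\alpha_0$ and $a-b=2\sigma$), the whole right-hand side of~\eqref{E:Dist_7n} minus $\lambda e(T)=\lambda\rho(1-T)^{1/2}(u-v)$ turns out to equal $\rho(1-T)^{1/2}(Q-1)\bigl[(1+\lambda/2-\lambda\alpha_0)(u-v)+\lambda\sigma(u+v)\bigr]/(1-2\alpha_0)$. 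Since $0<T<1$ and $\alpha_0\neq1/2$ we have $Q\neq1$, so~\eqref{E:Dist_7n} holds if and only if the bracket vanishes, which is precisely the asserted condition.

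The whole argument is mechanical; the only real obstacle is the bookkeeping — keeping the various power-function terms and their signs straight and spotting both the pairwise cancellations and the $ab$-identity that clears all denominators — and there is no conceptual difficulty. The excluded case $\alpha_0=1/2$ runs identically after replacing $(1-s)^{1-2\alpha_0}$ in~\eqref{E:Covar} by $-\ln(1-s)$.
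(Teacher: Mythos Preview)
Your proof is correct and follows essentially the same direct computation as the paper: both expand the integrand into powers of $1-s$, integrate term by term, invoke the identity $(\alpha_0-1/2)^2-\sigma^2=(\alpha_0+\alpha_1-1)/\lambda$ to clear the denominators, and then factor out and divide by the nonzero quantity $(1-T)^{2\alpha_0-1}-1$. Your explicit verification that the integration exponents $\pm a,\pm b$ are nonzero is a small addition the paper leaves implicit, but otherwise the arguments coincide.
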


\begin{proof}
  Since we assume $\alpha_0 \neq 1/2$ we can multiply both sides in~\eqref{E:Dist_7n} by $1-2\alpha_0$. Moreover, we ignore the constant $\rho \neq 0$ in~\eqref{E:Dist_8n}. That is, with the definition of $\mu_{\alpha_0, \alpha_1, T}$ in~\eqref{E:Mu}, we consider
  \begin{align}
    0 &= - \lambda (1-2\alpha_0) \tilde e(T) + \frac{ (1-2\alpha_0) R^{(\alpha_0)}(T, T) \tilde e(T) }{1-T} \label{E:Dist_6n} \\
    &\qquad + (\alpha_0 + \alpha_1 - 1) (1-2\alpha_0) \int_0^T R^{(\alpha_0)}(T, s) \tilde e(s) / (1-s)^2 ds \notag
  \end{align}
  with
  \equ[E:Dist_5n]{ \tilde e(t) = (1-t)^{\sigma + 1/2} - (1-t)^{-\sigma + 1/2}. }

  Then
  \begin{align*}
    &(1-2\alpha_0) \int_0^T R^{(\alpha_0)}(T, s) \tilde e(s) / (1-s)^2 ds \\
    &\quad = \frac{ (1-T)^{1/2 - \sigma} - (1-T)^{2\alpha_0 - 1/2 + \sigma} }{\alpha_0 - 1/2 + \sigma} + \frac{(1-T)^{2\alpha_0 - 1/2 - \sigma} - (1-T)^{1/2 + \sigma}}{\alpha_0 - 1/2 - \sigma} \\
    &\quad = \frac{ (\alpha_0 - 1/2 - \sigma) \left( (1-T)^{1/2 - \sigma} - (1-T)^{2\alpha_0 - 1/2 + \sigma} \right)}{(\alpha_0 + \alpha_1 - 1)/\lambda} \\
    &\quad \qquad + \frac{ (\alpha_0 - 1/2 + \sigma) \left( (1-T)^{2\alpha_0 - 1/2 - \sigma} - (1-T)^{1/2 + \sigma} \right)}{(\alpha_0 + \alpha_1 - 1)/\lambda},
  \end{align*}
  where we used $(\alpha_0 - 1/2 + \sigma)(\alpha_0 - 1/2 - \sigma) = (\alpha_0 + \alpha_1 - 1)/\lambda$. Plugging this into~\eqref{E:Dist_6n} and replacing $\tilde e(t)$ according to~\eqref{E:Dist_5n} we get
  \begin{align*}
    0 &= - \lambda (1-2 \alpha_0) \left( (1-T)^{1/2 + \sigma} - (1-T)^{1/2 - \sigma} \right) \\
      &\qquad + \lambda (\alpha_0 - 1/2 - \sigma) \left( (1-T)^{1/2 - \sigma} - (1-T)^{2\alpha_0 - 1/2 + \sigma} \right) \\
      &\qquad + \lambda (\alpha_0 - 1/2 + \sigma) \left( (1-T)^{2\alpha_0 - 1/2 - \sigma} - (1-T)^{1/2 + \sigma} \right) \\
      &\qquad + \left( (1-T)^{1/2+\sigma} - (1-T)^{1/2 - \sigma} \right) \left( (1-T)^{2\alpha_0 - 1} - 1 \right) \\
      &= \lambda \sigma \left( (1-T)^\sigma + (1-T)^{- \sigma} \right) \left( (1-T)^{2 \alpha_0 - 1/2} - (1-T)^{1/2} \right) \\
      &\qquad + (1+\lambda(1/2 - \alpha_0)) \left( (1-T)^\sigma - (1-T)^{- \sigma} \right) \\
      &\qquad \qquad \times \left( (1-T)^{2 \alpha_0 - 1/2} - (1-T)^{1/2} \right).
  \end{align*}
  We divide by $(1-T)^{2 \alpha_0 - 1/2} - (1-T)^{1/2}$ and obtain
  \[ 0 = \lambda \sigma ((1-T)^{\sigma} + (1-T)^{-\sigma}) + (1 + \lambda/2 - \lambda \alpha_0) ((1-T)^{\sigma} - (1-T)^{-\sigma}). \qedhere \]
\end{proof}

In Lemmas~\ref{L:4} and~\ref{L:5} we consider the function
\[ G(\sigma) = (\sigma + \alpha_0 - 1/2) (\sigma + \alpha_1 - 1/2) - (1-T)^{2\sigma} (\sigma - \alpha_0 + 1/2) (\sigma - \alpha_1 + 1/2) \]
for $\sigma \geq 0$.

\begin{lemma}\label{L:4}
  Assume that $0 \leq T < 1$, $\alpha_1 > 1/2$, and
  \[ \alpha_0 \geq \frac{1/2 - \alpha_1}{ (1/2 - \alpha_1) \ln(1-T) + 1 } + 1/2. \]
  Then $G(\sigma) > 0$ for all $\sigma > 0$.
\end{lemma}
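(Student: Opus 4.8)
The plan is to reduce the claim to a one-line inequality between $\sinh$ and $\cosh$. Throughout, write $L := -\ln(1-T) \ge 0$, $a := \alpha_0 - 1/2$, and $b := \alpha_1 - 1/2$; by hypothesis $b > 0$, and by the standing assumption $\alpha_0 + \alpha_1 > 1$ we have $a + b > 0$. Since $bL + 1 \ge 1 > 0$, multiplying the hypothesis on $\alpha_0$ through by $bL + 1$ shows it is equivalent to
\[ abL + a + b \ge 0. \]
Using $(1-T)^{2\sigma} = e^{-2\sigma L}$ together with $\sigma + \alpha_0 - 1/2 = \sigma + a$, $\sigma + \alpha_1 - 1/2 = \sigma + b$, $\sigma - \alpha_0 + 1/2 = \sigma - a$, $\sigma - \alpha_1 + 1/2 = \sigma - b$, I would expand the two products in $G$ and collect the coefficient of $e^{-2\sigma L}$ to get
\[ G(\sigma) = (\sigma^2 + ab)(1 - e^{-2\sigma L}) + (a+b)\sigma(1 + e^{-2\sigma L}). \]

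Next I would pull out $e^{-\sigma L}$: since $1 - e^{-2\sigma L} = 2e^{-\sigma L}\sinh(\sigma L)$ and $1 + e^{-2\sigma L} = 2e^{-\sigma L}\cosh(\sigma L)$, this gives $G(\sigma) = 2e^{-\sigma L}H(\sigma)$ with
\[ H(\sigma) := (\sigma^2 + ab)\sinh(\sigma L) + (a+b)\sigma\cosh(\sigma L). \]
As $2e^{-\sigma L} > 0$, it suffices to show $H(\sigma) > 0$ for all $\sigma > 0$, and I would do this by distinguishing the sign of $\sigma^2 + ab$. If $\sigma^2 + ab \ge 0$, then both summands of $H(\sigma)$ are $\ge 0$ and $(a+b)\sigma\cosh(\sigma L) > 0$, so $H(\sigma) > 0$ at once. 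If $\sigma^2 + ab < 0$, I would invoke the elementary bound $\sinh x \le x\cosh x$ (equivalently $\tanh x \le x$) for $x = \sigma L \ge 0$; multiplying it by the negative number $\sigma^2 + ab$ reverses the inequality, giving
\[ H(\sigma) \ge (\sigma^2 + ab)\,\sigma L\cosh(\sigma L) + (a+b)\sigma\cosh(\sigma L) = \sigma\cosh(\sigma L)\bigl[(\sigma^2 + ab)L + a + b\bigr]. \]
Then $(\sigma^2 + ab)L + a + b = \sigma^2 L + (abL + a + b) \ge \sigma^2 L$ by the rewritten hypothesis, and this bracket is strictly positive for $\sigma > 0$: it is $\ge \sigma^2 L > 0$ when $L > 0$ and equals $a + b > 0$ when $L = 0$. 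Hence $H(\sigma) > 0$, and the lemma follows.

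The computation is short, and the only step requiring care is the boundary case $abL + a + b = 0$ of the hypothesis: there the crude estimate $\tanh x \le x$ is exactly tight enough, and strict positivity of $H(\sigma)$ is rescued by the surviving term $\sigma^2 L\cosh(\sigma L)$ — equivalently, by $a + b > 0$ when $T = 0$. Everything else is routine algebra, so I do not expect any real obstacle.
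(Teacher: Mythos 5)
Your proof is correct. With $a=\alpha_0-1/2$, $b=\alpha_1-1/2$, $L=-\ln(1-T)$, the hypothesis is indeed equivalent to $abL+a+b\ge 0$ (since $bL+1>0$), the identity $G(\sigma)=2e^{-\sigma L}\bigl[(\sigma^2+ab)\sinh(\sigma L)+(a+b)\sigma\cosh(\sigma L)\bigr]$ checks out, and the case split on the sign of $\sigma^2+ab$ together with $\tanh x\le x$ closes the argument; your appeal to the standing assumption $\alpha_0+\alpha_1>1$ for $a+b>0$ is legitimate (and for $L>0$ it even follows from the hypothesis alone, since $a\ge -b/(bL+1)>-b$). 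The paper's proof is organized differently: it regards $G(\sigma)$ as an affine, increasing function of $\alpha_0$ (the coefficient of $\alpha_0$ is positive because $\alpha_1>1/2$ and $\sigma>0$), substitutes the boundary value of $\alpha_0$, and reduces to showing that $f(\sigma)=\sigma(1+(1-T)^{2\sigma})\ln(1-T)+1-(1-T)^{2\sigma}$ is negative for $\sigma>0$, which it establishes by concavity ($f(0)=0$, $f''<0$). Since $f(\sigma)=2e^{-\sigma L}\bigl(\sinh(\sigma L)-\sigma L\cosh(\sigma L)\bigr)$, the paper is in effect re-deriving your inequality $\tanh x\le x$ by calculus, so both proofs rest on the same elementary fact; yours makes the hypothesis enter more transparently, as the single linear inequality $abL+a+b\ge0$, and avoids second derivatives, at the cost of a case distinction the paper does not need. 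A small bonus of your route: the paper's intermediate claim that $f'(0)<0$ is in fact $f'(0)=0$ (harmless there, since $f''<0$ still forces $f<0$ on $\sigma>0$), and you also treat the degenerate case $L=0$ explicitly, where the paper's final bound degenerates to $G\ge 0$ and strict positivity needs $a+b>0$ exactly as you note.
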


\begin{proof}
  We have
  \begin{align*}
    G(\sigma) &= \alpha_0 (\sigma(1+(1-T)^{2\sigma}) + (1-(1-T)^{2\sigma})(\alpha_1 - 1/2)) \\
              &\qquad + (1-(1-T)^{2\sigma})(\sigma^2 - \alpha_1/2 + 1/4) + \sigma(1+(1-T)^{2\sigma})(\alpha_1-1) \\ 
              &\geq \left( \frac{1/2 - \alpha_1}{ (1/2 - \alpha_1) \ln(1-T) + 1 } + 1/2 \right) \\
              &\qquad \qquad \times (\sigma(1+(1-T)^{2\sigma}) + (1-(1-T)^{2\sigma})(\alpha_1 - 1/2)) \\
              &\qquad + (1-(1-T)^{2\sigma})(- \alpha_1/2 + 1/4) + \sigma(1+(1-T)^{2\sigma})(\alpha_1-1) \\
              &= - \frac{(\alpha_1-1/2)^2}{(1/2 - \alpha_1) \ln(1-T) + 1} f(\sigma)
  \end{align*}
  with
  \[ f(\sigma) = \sigma(1+(1-T)^{2\sigma}) \ln(1-T) + (1-(1-T)^{2\sigma}). \]
  It follows $f(0) = 0$ and
  \[ f'(\sigma) = \ln(1-T) \left( 1- (1-T)^{2\sigma} + 2 \sigma \ln(1-T) (1-T)^{2\sigma} \right) \]
  implying $f'(0) < 0$. Moreover,
  \[ f''(\sigma) = 4 \sigma \ln(1-T)^3 (1-T)^{2\sigma} < 0, \]
  i.e., $f$ is a strictly concave function and thus, $f(\sigma) < 0$ for all $\sigma > 0$. This yields,
  \[ G(\sigma) \geq - \frac{(\alpha_1-1/2)^2}{(1/2 - \alpha_1) \ln(1-T) + 1} f(\sigma) > 0 \]
  for all $\sigma > 0$.
\end{proof}

\begin{lemma}\label{L:5}
  Assume $0 \leq T < 1$, $\alpha_1 > 1/2$, and
  \[ \alpha_0 < \frac{1/2 - \alpha_1}{ (1/2 - \alpha_1) \ln(1-T) + 1 } + 1/2. \]
  Then
  \begin{enumerate}
    \item $G(\sigma) > 0$ for $\sigma \geq 1/2 - \alpha_0$,
    \item $G'(0) < 0$, and
    \item $G''(\sigma) > 0$ for $0 \leq \sigma \leq 1/2 - \alpha_0$.
  \end{enumerate}
\end{lemma}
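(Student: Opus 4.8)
The plan is to recast everything in the variables $\ell := \ln(1-T)$, $a := 1/2 - \alpha_0$, and $b := \alpha_1 - 1/2$, in which the hypothesis on $\alpha_0$ becomes transparent. From $\alpha_1 > 1/2$ we have $b > 0$; the right-hand side of the assumed bound on $\alpha_0$ is strictly below $1/2$ (the fraction appearing there is negative: its numerator $1/2 - \alpha_1$ is negative while its denominator $(1/2-\alpha_1)\ln(1-T)+1$ is at least $1$), so $\alpha_0 < 1/2$, i.e.\ $a > 0$; and the running assumption $\alpha_0 + \alpha_1 > 1$ reads $b > a$. A one-line rearrangement shows that the bound on $\alpha_0$ is equivalent to
\[ |\ell|\, a b > b - a , \]
which in particular forces $\ell < 0$ (were $\ell = 0$, this would say $0 > b - a$, contradicting $b > a$). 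Finally, with $A(\sigma) := (\sigma - a)(\sigma + b)$ and $B(\sigma) := (\sigma + a)(\sigma - b)$ one has $G(\sigma) = A(\sigma) - e^{2\sigma\ell} B(\sigma)$ and, crucially, $A(\sigma) - B(\sigma) = 2(b-a)\sigma$; these are the only algebraic identities the argument uses.

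For part~(i), fix $\sigma \ge a$. Then $A(\sigma) = (\sigma - a)(\sigma + b) \ge 0$ and $A(\sigma) - B(\sigma) = 2(b-a)\sigma \ge 2(b-a)a > 0$. If $B(\sigma) \le 0$, then $-e^{2\sigma\ell} B(\sigma) \ge 0$, so $G(\sigma) \ge A(\sigma) \ge 0$, and equality would force $\sigma = a$; but $B(a) = -2(b-a)a < 0$ strictly, so $G(a) > 0$. If instead $B(\sigma) > 0$, then $e^{2\sigma\ell} \le 1$ gives $e^{2\sigma\ell} B(\sigma) \le B(\sigma) < A(\sigma)$, whence $G(\sigma) > 0$. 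Part~(ii) is then an immediate computation: differentiating $G(\sigma) = A(\sigma) - e^{2\sigma\ell} B(\sigma)$ once and evaluating at $0$ (using $A'(0) = b - a$, $B(0) = -ab$, $B'(0) = -(b-a)$) gives $G'(0) = 2(b-a) + 2\ell ab$, which is negative precisely when $b - a < -\ell ab = |\ell|\, ab$, i.e.\ precisely under the hypothesis.

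For part~(iii) I would differentiate twice. A short calculation gives
\[ G''(\sigma) = 2 - e^{2\sigma\ell}( 4\ell\, h(\sigma) + 2 ), \qquad h(\sigma) := \ell B(\sigma) + B'(\sigma) = \ell \sigma^2 + ( 2 - \ell(b-a) )\sigma - ( \ell ab + (b-a) ) . \]
The claim then reduces to the statement that $h(\sigma) > 0$ for $0 \le \sigma \le a$: granting this, $4\ell\, h(\sigma) + 2 < 2$ (as $\ell < 0$), and since $0 < e^{2\sigma\ell} \le 1$ we get $G''(\sigma) \ge 2 > 0$ when $4\ell\, h(\sigma) + 2 \le 0$, and $G''(\sigma) > 2 - 2 = 0$ when $4\ell\, h(\sigma) + 2 \in (0,2)$. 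Because $h$ is a downward parabola (leading coefficient $\ell < 0$), hence concave on $[0,a]$, it lies above the chord joining its endpoint values there, so it suffices to check $h(0) > 0$ and $h(a) > 0$. The first is immediate: $h(0) = |\ell|\, ab - (b-a) > 0$ by the hypothesis.

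The one genuinely delicate point is $h(a) > 0$, and it is here that the sharp form of the hypothesis is needed. Expanding, $h(a) = 2|\ell|\, a(b-a) + 3a - b$; inserting $|\ell| > (b-a)/(ab)$ gives
\[ h(a) > \frac{2(b-a)^2}{b} + 3a - b = \frac{b^2 - ab + 2a^2}{b} , \]
and the numerator, regarded as a quadratic in $b$, has discriminant $a^2 - 8a^2 = -7a^2 < 0$ and positive leading coefficient, hence is strictly positive; since $b > 0$, $h(a) > 0$. I expect this estimate — together with keeping straight the equivalence of the hypothesis with $|\ell|\, ab > b - a$ and with $b > a$ — to be the only step needing care; everything else is routine manipulation of the factorizations of $A$ and $B$.
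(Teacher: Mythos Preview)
Your proof is correct and follows essentially the same route as the paper. After translating to $a=1/2-\alpha_0$, $b=\alpha_1-1/2$, $\ell=\ln(1-T)$, your part~(i) is exactly the paper's sign split on $B(\sigma)=(\sigma-\alpha_0+1/2)(\sigma-\alpha_1+1/2)$, and your part~(ii) is the same direct computation of $G'(0)$. In part~(iii) both arguments reduce to the positivity of the same auxiliary function (your $h$ equals the paper's $g$); the paper splits $[0,1/2-\alpha_0]$ at $(\alpha_0+\alpha_1-1)/2$ and bounds $g$ separately on each piece, whereas you use the concavity of $h$ to check only the two endpoints. Your endpoint check at $\sigma=a$ via the discriminant is clean and arguably tidier than the case split; conversely, the paper's approach avoids the extra algebraic estimate at $\sigma=a$. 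Either way, the substance is the same.
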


\begin{proof}
  If $(\sigma - \alpha_0 + 1/2) (\sigma - \alpha_1 + 1/2) > 0$, the estimate $(1-T)^{2\sigma} < 1$ yields
  \begin{align*}
    G(\sigma) &> (\sigma + \alpha_0 - 1/2)(\sigma + \alpha_1 - 1/2) - (\sigma - \alpha_0 + 1/2)(\sigma - \alpha_1 + 1/2) \\
      &= 2 \sigma (\alpha_0 + \alpha_1 - 1) \geq 0.
  \end{align*}
  If $(\sigma - \alpha_0 + 1/2) (\sigma - \alpha_1 + 1/2) < 0$ the estimate $(1-T)^{2\sigma} > 0$ yields
  \begin{align*}
    G(\sigma) > (\sigma + \alpha_0 - 1/2)(\sigma + \alpha_1 - 1/2) \geq 0,
  \end{align*}
  since $(\sigma + \alpha_0 - 1/2) \geq 0$ and $(\sigma + \alpha_1 - 1/2) \geq 0$ by the assumptions. This proves~(i).

  The derivative of $G$ is given by
  \begin{align*}
    G'(\sigma) &= 2 \sigma (1-(1-T)^{2\sigma}) + (\alpha_0 + \alpha_1 - 1)(1+(1-T)^{2\sigma}) \\
      &\qquad-2(1-T)^{2\sigma}\ln(1-T) (\sigma - \alpha_0 + 1/2) (\sigma - \alpha_1 + 1/2).
  \end{align*}
  Hence,
  \begin{align*}
    G'(0) &= 2 (\alpha_0 + \alpha_1 - 1) - 2 \ln(1-T) (1/2 - \alpha_0) (1/2 - \alpha_1) \\
          &= 2 \alpha_0 (1+\ln(1-T)(1/2 - \alpha_1)) + 2 \alpha_1 - 2  - \ln(1-T) (1/2 - \alpha_1) \\
          &> 2 \left(\frac{1/2 - \alpha_1}{ (1/2 - \alpha_1) \ln(1-T) + 1 } + 1/2\right)(1+\ln(1-T)(1/2 - \alpha_1)) \\
          &\qquad \qquad + 2 \alpha_1 - 2  - \ln(1-T) (1/2 - \alpha_1) \\
          &= 2 (1/2 - \alpha_1) + (1/2 - \alpha_1) \ln(1-T) \\
          &\qquad \qquad + 1 + 2 \alpha_1 - 2  - \ln(1-T) (1/2 - \alpha_1) \\
          &= 0,
  \end{align*}
  which proves~(ii).

  The second derivative of $G$ equals
  \[ G''(\sigma) = 2(1-(1-T)^{2\sigma}) + 4 (1-T)^{2\sigma} |\ln(1-T)| g(\sigma) \]
  with
  \[ g(\sigma) = 2\sigma - \alpha_0 - \alpha_1 + 1 + \ln(1-T)(\sigma-\alpha_0 + 1/2)(\sigma-\alpha_1 + 1/2). \]

  Assume $\sigma \geq (\alpha_0 + \alpha_1 - 1)/2$. The assumption $\sigma \leq 1/2 - \alpha_0$ implies $\sigma \geq \alpha_1 - 1/2$ and thus $\ln(1-T)(\sigma-\alpha_0 + 1/2)(\sigma-\alpha_1 + 1/2) \geq 0$. It follows
  \[ g(\sigma) \geq 2\sigma - \alpha_0 - \alpha_1 + 1 \geq 0. \]

  Finally, if $\sigma < (\alpha_0 + \alpha_1 - 1)/2$, then
  \begin{align*} 
    g(0) &= 1 - \alpha_0 - \alpha_1 + \ln(1-T)(1/2-\alpha_0)(1/2 - \alpha_1) \\
      &> 1 - \alpha_0 - \alpha_1 - \frac{\ln(1-T)(1/2 - \alpha_1)^2}{ (1/2 - \alpha_1) \ln(1-T) + 1 } \\
      &> 1 - \alpha_0 - \alpha_1 - \frac{\ln(1-T)(1/2 - \alpha_1)^2}{ (1/2 - \alpha_1) \ln(1-T)} \\
      &= 1 - \alpha_0 - \alpha_1 - (1/2 - \alpha_1) = 1/2 - \alpha_0 > 0
  \end{align*}
  and
  \[ g'(\sigma) = 2 + \ln(1-T)(2\sigma - \alpha_0 - \alpha_1 + 1) \geq 2. \]
  Hence $g(\sigma) \geq 0$ for all relevant $0 \leq \sigma \leq 1/2 - \alpha_0$ and thus $G''(\sigma) > 0$ for all $0 \leq \sigma \leq 1/2 - \alpha_0$. This proves~(iii).
\end{proof}


\section*{Acknowledgments}

The author would like to thank Ingemar Kaj for valuable comments.
 

\bibliographystyle{plain}
\bibliography{bibl}


\end{document}